\newtheorem{thm}{Theorem}[section]
\newtheorem{lem}[thm]{Lemma}
\newtheorem{prop}[thm]{Proposition}
\newtheorem{cor}{Corollary}
\theoremstyle{definition}
	\newtheorem{defn}[thm]{Definition}
	\newtheorem{exmp}[thm]{Example}
\theoremstyle{remark}
\numberwithin{equation}{section}
\newcommand{\TTOs}{\ensuremath{\mathcal{T}_u}}
\newcommand{\TTOsa}{\ensuremath{\mathcal{T}_{u_\alpha}}}
\newcommand{\modsp}[1][u]{\ensuremath{K_{#1}^2}}
\newcommand{\modspa}{\modsp[u_\alpha]}
\newcommand{\modspbdd}{\ensuremath{K_u^\infty}}
\newcommand{\ltwo}{\ensuremath{L^2(\torus)}}
\newcommand{\linfty}{\ensuremath{L^\infty(\torus)}}
\newcommand{\ip}[2]{\ensuremath{\left\langle #1,#2 \right\rangle}}
\newcommand{\CS}{\ensuremath{S_u}}
\newcommand{\CSalpha}[1][\alpha]{\ensuremath{S_u^{#1}}}
\newcommand{\SC}[1]{\ensuremath{\CS\widetilde{#1}}}
\newcommand{\Ku}[1][0]{\ensuremath{K_{#1}^u}}
\newcommand{\CKu}[1][0]{\ensuremath{\widetilde{\Ku[#1]}}}
\newcommand{\TTOeq}{\ensuremath{\stackrel{A}{\equiv}}}
\newcommand{\Balpha}[1][\alpha]{\ensuremath{\mathcal{B}^{#1}}}
\newcommand{\disc}{\ensuremath{\mathbb{D}}}
\newcommand{\cdisc}{\ensuremath{\overline{\disc}}}
\newcommand{\torus}{\ensuremath{\mathbb{T}}}
\newcommand{\plane}{\ensuremath{\mathbb{C}}}
\newcommand{\cplane}{\ensuremath{\plane^*}}
\newcommand{\spKu}[1][0]{\ensuremath{\plane\Ku[#1]}}
\newcommand{\cmt}[1]{\ensuremath{[#1]'}}
\begin{document}

\title[Algebras of TTOs]{Algebras of Truncated Toeplitz Operators}


\author{N.A. Sedlock}
\address{Framingham State University\\Department of Mathematics\\100 State Street, P.O. Box 9101 \\ Framingham, MA 01701-9101}
\email{nsedlock@framingham.edu}

\subjclass[2010]{Primary 47L80; Secondary 47A05, 47B35}
\keywords{Toeplitz operator, truncated Toeplitz operator, model space, reproducing kernel, complex symmetric operator, operator algebra, maximal algebra, conjugation, bounded operator interpolation}

\date{\today}



\begin{abstract}
		We find necessary and sufficient conditions for the product of two truncated Toeplitz operators on a model space to itself
		be a truncated Toeplitz operator, and as a result find a characterization for the maximal algebras of  bounded truncated Toeplitz operators.
\end{abstract}

\maketitle

\section{Introduction}

Let $\plane$ denote the complex plane, $\plane^*$ the Riemann sphere, $\disc$ denote the unit disc, and
let $\torus$ denote the unit circle. $H^2$ is the usual Hardy space, the
subspace of $\ltwo$ of normalized Lebesgue measure $m$ on \torus\ whose harmonic
extensions to $\disc$ are holomorphic (or, whose negative indexed Fourier
coefficients are all zero). $H^2$ will interchangably refer to both the boundary
functions and the functions on $\disc$. Let $P$ denote the projection from
$\ltwo$ to $H^2$, which is given explicitly by the Cauchy integral:
	\[(Pf)(\lambda) =
\int_\torus\frac{f(\zeta)}{1-\lambda\overline{\zeta}}\ dm(\zeta),
\lambda\in\disc.\]The reproducing kernel at
$\lambda\in\disc$ for the Hardy space is the the Szego kernel
$K_\lambda:=(1-\overline{\lambda}z)^{-1}$. 
	$S$ denotes the shift operator $f \mapsto zf$ on $H^2$. Its adjoint (the backward
shift) is the operator\[S^*f = \frac{f-f(0)}{z}.\]

	A Toeplitz operator is the compression of a multiplication operator on
$\ltwo$ to $H^2$. In other words, given $\Phi \in \ltwo$ (called the symbol of
the operator), $T_\Phi = PM_\Phi$ is the operator that sends $f$ to $P(\Phi f)$ for
all $f \in H^2$.  This operator is bounded if and only if $\Phi\in\linfty$, and
the mapping $\Phi \rightarrow T_\Phi$ from $L^\infty$ to the space of bounded
operators on $H^2$ is linear and one-to-one. In the case that $\Phi\in
H^\infty$, the Toeplitz operator $T_\Phi$ is just the multiplication operator $M_\Phi$.
In \cite{brownhalmos}, Brown and Halmos describe the algebraic properties of
Toeplitz operators. Among other things, they found necessary and sufficient
conditions for the product of two Toeplitz operators to itself be a Toeplitz
operator, namely that either the first operator's symbol is antiholomorphic or
the second operator's symbol is holomorphic. In either case, the symbol of the product is the product of the symbols (i.e.  $T_\Phi T_\Psi = T_{\Phi\Psi}$).

	More recently, Sarason~\cite{sarason} found analogues to several of
Brown and Halmos's results for truncated Toeplitz operators on the model spaces
$H^2 \ominus uH^2$, where $u$ is some non-constant inner function. The model
spaces are the backward-shift invariant subspaces of $H^2$ (that they are
backward shift invariant follows easily from the fact that $uH^2$ is clearly
shift invariant). Let \modsp\ denote the space $H^2 \ominus uH^2$ from here
forward. Let $P_u=P - M_uPM_{\overline{u}}$ denote the projection from $L^2$ to
\modsp.

	Given $\Phi\in\ltwo$ we then define the truncated Toeplitz operator (TTO)
$A_\Phi$ to be the operator that sends $f$ to $P_u(\Phi f)$ for all
$f\in\modsp$. $A_\Phi$ is well-defined on the set of bounded functions in $\modsp$, which is dense in $\modsp$ and which we denote $\modspbdd$. We let $\TTOs$ denote the set of truncated Toeplitz operators which extend to be bounded on all of $\modsp$.

 Truncated Toeplitz operators have many of the same properties as
ordinary Toeplitz operators (for example, $A_\Phi^* = A_{\overline{\Phi}}$)
but there are also striking differences. For example, there are
bounded truncated Toeplitz operators with unbounded symbols~\cite{arXiv} (though any
truncated Toeplitz operator with a bounded symbol is itself bounded).  Additionally,
symbols are not unique: the same
operator can be generated from more than one symbol, and we say that $\Psi$ is a
symbol for $A_\Phi$ if $A_\Phi = A_\Psi$. Given two functions $\Psi$ and $\Phi$, we write $\Psi \TTOeq \Phi$ to mean that $A_\Psi = A_\Phi$.

The truncated Toeplitz operators in $\TTOs$ do not form an algebra. There are, however, weakly closed algebras contained in $\TTOs$. The goal of this paper is to describe the maximal algebras contained in $\TTOs$, where by maximal we mean that any weakly closed algebra in $\TTOs$ is contained within one of these maximal algebras. 

In what follows, for functions $f,g$ in $\ltwo$, $\ip{f}{g} = \int_\torus f\overline{g}\ dm$, $\|f\| = \sqrt{\ip{f}{f}}$ and $f\otimes g$ is the rank one operator that maps $h$ to $f\ip{h}{g}$. Further, if $A$ is an operator on a Hilbert space, $\cmt{A}$ denotes the commutant of $A$.

\section{Background}

In this section we lay out basic facts about operators in $\TTOs$ and model spaces.
Let $u$ be a non-trivial inner function. \modsp\ is then a
reproducing kernel Hilbert space with reproducing kernels $\Ku[\lambda]:=P_u
K_\lambda = \frac{1-\overline{u(\lambda)}u}{1-\overline{\lambda}z}$ for
$\lambda\in\disc$. Note that $\Ku[\lambda]$ is bounded for all $\lambda$, and hence in $\modspbdd$.

The function $u$ is said to have an angular derivative in the sense of Caratheodory (ADC) at the point $\zeta\in\torus$ if $u$ has a nontangential limit $u(\zeta)$ of unit modulus at $\zeta$ and $u'$ has a nontangential limit $u'(\zeta)$ at $\zeta$. It is known that $u$ has an ADC at 
$\zeta$ if and only if every function in $\modsp$ has a nontangential limit at $\zeta$~\cite{sarason2}. Thus there exists a reproducing kernel function $\Ku[\zeta]$ such that $\ip{f}{\Ku[\zeta]} = f(\zeta)$. Specifically, $\Ku[\zeta]$ is the limit of $\Ku[\lambda]$ as $\lambda$ approaches $\zeta$ nontangentially in the disc and so $\Ku[\zeta] = \frac{1-\overline{u(\zeta)}u}{1-\overline{\zeta}z}$. If $u$ is a finite Blaschke product, both $u$ and $u'$ are holomorphic in a domain which compactly contains $\disc$ and so these boundary reproducing kernels are defined for every unimodular $\zeta$.

Truncated Toeplitz operators have a symmetry property called $C$-symmetry. This concept is due to Garcia and Putinar~\cite{garcia, garciaputinar1, garciaputinar2}. 
 Given a
\plane-Hilbert space $\mathcal{H}$ and an antilinear isometric involution $C$ on
$\mathcal{H}$, we say that a bounded operator $T$ is a $C$-symmetric operator
(CSO) if $T^* = CTC$. Here by isometric we mean that $\ip{Cf}{Cg} = \ip{g}{f}$.

	In \ltwo, the operator $Cf= u\overline{zf}$ is a conjugation
which bijectively maps $uH^2$ to $\overline{zH^2}$ and \modsp\ to itself. By restricting ourselves to $\modsp$, $C$ can be thought of as a conjugation on $\modsp$. From
here on, $C$ always refers to this operator. We will sometimes write
$\widetilde{f}$ for $Cf$ for sake of readability. The conjugate reproducing kernel is $\CKu[\lambda](z)=
\frac{u(z) - u(\lambda)}{z-\lambda}$ for $z\neq\lambda$ and $\CKu[\lambda](\lambda) = u'(\lambda)$ and has the property that for $f\in\modsp$, $\widetilde{f}(\lambda) = \ip{\CKu[\lambda]}{f}$. 

Consider the operator $C A_\Phi C$, where $\Phi\in\ltwo$ and $A_\Phi\in\TTOs$. If $f,g\in\modsp$, then
\begin{align*}
	\ip{CA_\Phi Cf}{g} &= \ip{Cg}{A_\Phi Cf}\\
			&= \ip{u\overline{zg}}{\Phi u \overline{zf}}\\
			&= \ip{\overline{\Phi} f}{g} \\
			&= \ip{\left(A_\Phi\right)^* f}{g}
\end{align*}
and so we see that operators in $\TTOs$ are $C$-symmetric.

Two CSOs commute if and only if their product is $C$-symmetric.
\begin{prop}\label{prop:CSOcommute}
		Let $A_1$ and $A_2$ be $C$-symmetric. Then $A_1 A_2$ is $C$-symmetric if and only if $A_1$ and $A_2$ commute. 
	\end{prop}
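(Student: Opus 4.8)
The plan is to translate both sides of the defining relation $T^* = CTC$ for the product $T = A_1 A_2$ into statements involving only the factors $A_1, A_2$ and their adjoints, and then to recognize the resulting identity as commutativity. I would first record the two elementary computations. The adjoint side is immediate: $(A_1 A_2)^* = A_2^* A_1^*$. For the conjugated side, the crucial point is that $C$ is an involution, so $C^2 = I$ and I may freely insert $CC$ between the two factors:
\begin{equation*}
	C(A_1 A_2)C = C A_1 (CC) A_2 C = (C A_1 C)(C A_2 C) = A_1^* A_2^*,
\end{equation*}
where the last equality uses the hypothesis that each $A_i$ is $C$-symmetric. I would emphasize that this step is purely compositional: although $C$ is antilinear, inserting the identity $C^2 = I$ is legitimate regardless, and no order reversal occurs, so the factors $A_1^*, A_2^*$ appear in the \emph{same} order as $A_1, A_2$.

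Combining the two computations, $A_1 A_2$ is $C$-symmetric exactly when $A_2^* A_1^* = A_1^* A_2^*$, that is, exactly when $A_1^*$ and $A_2^*$ commute. Taking adjoints of both sides of this identity gives $A_1 A_2 = A_2 A_1$, and the same manipulation run backwards shows that commutativity of $A_1, A_2$ returns $A_1^* A_2^* = A_2^* A_1^*$. This yields both implications at once, so the \emph{if and only if} follows from a single equivalence chain rather than two separate arguments.

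I do not expect any serious obstacle. The entire content is the contrast between the two displayed identities: conjugation by the involution $C$ sends the product to $A_1^* A_2^*$ with the order \emph{preserved}, whereas the genuine adjoint sends it to $A_2^* A_1^*$ with the order \emph{reversed}. The only point requiring any care is keeping track of that ordering and of the antilinearity of $C$ when verifying $C(A_1 A_2)C = (C A_1 C)(C A_2 C)$; once that is in place, the mismatch in ordering is precisely what forces the commutator of $A_1^*$ and $A_2^*$—equivalently of $A_1$ and $A_2$—to vanish.
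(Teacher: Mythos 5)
Your proof is correct and is essentially the paper's own argument: both rest on inserting $C^2 = I$ to get $C(A_1A_2)C = (CA_1C)(CA_2C) = A_1^*A_2^*$ and comparing with $(A_1A_2)^* = A_2^*A_1^*$. Packaging the two implications as a single reversible equivalence chain, and the explicit remark about order preservation versus reversal, are only presentational differences from the paper's proof.
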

	
	\begin{proof}
		Say $A_1 A_2$ is $C$-symmetric. Then \[A_1A_2 = C A_2^* A_1^* C = C A_2^* C CA_1^* C = A_2 A_1.\]
		
		On the other hand, if $A_1$ and $A_2$ commute, then so do their adjoints, and so
		\[C A_1 A_2 C = A_1^* A_2^* = A_2^* A_1^*.\qedhere\]
	\end{proof}

The operator $\CS = P_u S = A_z$ is critical to what follows. Since $\modsp$ is invariant under $S^*$ we see that $\CS^* = S^*$. Let $f\in\modsp$ such that $f(0) = 0$, i.e.  $f \perp \Ku$. Then $S^*  f = f/z$. On the other hand, $S^*\Ku = (1 - \overline{u(0)}u - 1 + |u(0)|^2)/z = -\overline{u(0)}\CKu$. $\CS$ is $C$-symmetric, and so $\CS$ is characterized by the following equations: $\CS f = zf$ for $f \perp \CKu$, and $\CS\CKu = -u(0)\Ku$.

The symbols of TTOs are a more complex issue than the symbols of Toeplitz operators. Sarason proved the following results in~\cite{sarason} as Theorem 3.1 and Theorem 4.1 respecitively.
	\begin{prop}
	If $\Phi\in\ltwo$ then  $A_\Phi = 0$ if and only if $\Phi \in uH^2 + {\overline{uH^2}}$
\label{prop:TTOzerocond}.
	\end{prop}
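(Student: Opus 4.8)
The plan is to convert the operator equation into a bilinear orthogonality statement, settle the ``if'' direction by inspection, and reduce the ``only if'' direction to a spanning statement for products of model-space functions. For the reduction, note that for $f,g\in\modspbdd$ (dense in $\modsp$) we have $P_u g=g$, so
\[\ip{A_\Phi f}{g}=\ip{\Phi f}{P_u g}=\ip{\Phi f}{g}=\int_\torus\Phi f\overline g\,dm=\ip{\Phi}{\overline f g}.\]
Thus $A_\Phi=0$ if and only if $\Phi\perp\overline f g$ for all $f,g\in\modsp$; writing $N$ for the closed linear span of $\{\overline f g:f,g\in\modsp\}$, the proposition becomes the identity $N^\perp=uH^2+\overline{uH^2}$.

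The ``if'' direction I would prove directly on the operator. If $\Phi=u\chi$ with $\chi\in H^2$, then for $f\in\modspbdd$ the product $\chi f$ lies in $H^2$, so $\Phi f=u(\chi f)\in uH^2$; since $uH^2\perp\modsp$ this gives $A_{u\chi}f=P_u(\Phi f)=0$. Applying $A_{\overline\Psi}=A_\Psi^*$ yields $A_{\overline{u\eta}}=A_{u\eta}^*=0$ for $\eta\in H^2$, and linearity finishes the inclusion $uH^2+\overline{uH^2}\subseteq N^\perp$, i.e. $N\subseteq(uH^2+\overline{uH^2})^\perp$.

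For the ``only if'' direction I would first establish the geometric fact that $uH^2+\overline{uH^2}$ is closed, by bounding the angle between the two summands: for $h,k\in H^2$,
\[\ip{uh}{\overline{uk}}=\int_\torus u^2hk\,dm=u(0)^2h(0)k(0),\]
so $|\ip{uh}{\overline{uk}}|\le|u(0)|^2\|h\|\,\|k\|$, and $|u(0)|<1$ since $u$ is nonconstant inner. Hence the cosine of the angle between $uH^2$ and $\overline{uH^2}$ is at most $|u(0)|^2<1$, the algebraic sum is closed, and $M:=(uH^2+\overline{uH^2})^\perp$ is a genuine orthogonal complement. Writing $\Phi=\Phi_M+\Phi_c$ with $\Phi_M\in M$ and $\Phi_c\in uH^2+\overline{uH^2}$, and using that $\Psi\mapsto\ip{\Psi}{\overline f g}$ is $L^2$-continuous and vanishes on $uH^2+\overline{uH^2}$ by the ``if'' direction, I get $A_{\Phi_c}=0$; so $A_\Phi=0$ forces $A_{\Phi_M}=0$, and the whole problem reduces to showing that $\psi\in M$ with $A_\psi=0$ must vanish.

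This last claim is the heart of the argument, and I expect it to be the main obstacle. Membership $\psi\in M$ unwinds to the two relations $\overline u\psi\in\overline{zH^2}$ and $u\psi\in zH^2$, while $A_\psi=0$ says $\ip{\psi}{\overline{\Ku[\lambda]}\Ku[\mu]}=0$ for all $\lambda,\mu\in\disc$, since the kernels $\Ku[\lambda]$ span $\modsp$. I would expand $(1-\overline\lambda z)^{-1}$ and $(1-\mu\overline z)^{-1}$ as power series in $\overline\lambda$ and $\mu$, equate coefficients, and combine the resulting conditions with the two membership relations to force every Fourier coefficient of $\psi$ to vanish. The genuine content here is that the products $\overline f g$, together with orthogonality to $uH^2$ and $\overline{uH^2}$, leave no room for a nonzero symbol; equivalently that $N$ exhausts all of $M$. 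The closedness lemma and the reduction are clean, but verifying that the generating-function conditions annihilate $\psi$ completely, rather than only on a proper subspace, is where the structure of $\modsp$ as $H^2\cap u\overline{zH^2}$ must be exploited in full.
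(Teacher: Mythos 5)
Your reformulation and reductions are correct, and some of them are genuinely nice: the pairing identity $\ip{A_\Phi f}{g}=\ip{\Phi}{\overline f g}$ on $\modspbdd$, the sufficiency argument, the angle estimate $\ip{uh}{\overline{uk}}=u(0)^2h(0)k(0)$ with $|u(0)|<1$ showing that $uH^2+\overline{uH^2}$ is closed (a clean lemma, and a fact the literature usually gets by other means), and the resulting reduction to: if $\psi\perp uH^2+\overline{uH^2}$ and $A_\psi=0$ then $\psi=0$. But that is where the proof stops, and that residual claim is not a technical remainder --- it \emph{is} the proposition, restated as the assertion that the closed span of $\{\overline f g: f,g\in\modspbdd\}$ exhausts $\left(uH^2+\overline{uH^2}\right)^\perp$. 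In coefficient form your conditions read $\ip{\psi}{\overline{P_u z^m}\,P_u z^n}=0$ for all $m,n\geq 0$ together with $\overline u\psi\in\overline{zH^2}$ and $u\psi\in zH^2$; ``expand and equate coefficients'' gives no indication of how these interact to annihilate $\psi$, and you flag this yourself as the expected obstacle. So the attempt has a genuine gap: everything carried out is soft (correct, but essentially bookkeeping around the $L^2$ pairing), while the entire analytic content of the theorem is deferred to an unexecuted plan. (A smaller point: passing from $\ip{\psi}{\overline{\Ku[\lambda]}\Ku[\mu]}=0$ to $A_\psi=0$ and back on all of $\modspbdd$ needs a two-step approximation using the boundedness of the kernels; this is routine but should be said.)

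For comparison: the paper itself gives no proof --- Proposition~\ref{prop:TTOzerocond} is quoted from Sarason (Theorem 3.1 of \cite{sarason}) --- and Sarason's route is quite different from yours. He works on the symbol side rather than with orthogonal complements: since $H^2\ominus uzH^2=\modsp\oplus\plane u$, one has $z\varphi = f + cu$ with $f\in\modsp$ for any $\varphi\in\modsp$, which lets one reduce an arbitrary $\Phi\in\ltwo$ modulo $uH^2+\overline{uH^2}$ to the form $\varphi_1+\overline{\varphi_2}$ with $\varphi_i\in\modsp$; the crux is then the rigidity statement that $A_{\varphi_1+\overline{\varphi_2}}=0$ forces $\varphi_1=c\Ku$, $\varphi_2=-\overline c\Ku$ (the paper's Proposition~\ref{prop:TTOhardzerocond}, whose proof in turn rests on the rank-two identity of Proposition~\ref{prop:TTOchar}). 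Note that $\Ku-\overline{\Ku}=u(0)\overline u-\overline{u(0)}u\in uH^2+\overline{uH^2}$, which is exactly the one-dimensional ambiguity your space $M$ quotients away, so the two formulations match. If you want to finish along your own lines, the efficient move is not raw Fourier matching but this same reduction followed by a reproducing-kernel computation: evaluate $\ip{A_\psi\Ku[\lambda]}{\Ku[\lambda]}$ and its conjugate-kernel analogue as holomorphic/antiholomorphic functions of $\lambda$ and deduce divisibility by $u$. As it stands, the proposal is a correct framing with the decisive step missing.
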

	\begin{prop}
		$A$ is in $\TTOs$ iff $A - \CS A\CS^* = \Phi\otimes \Ku  + \Ku
\otimes\Psi$ for some $\Phi, \Psi \in \modsp$, in which case $A = A_{\Phi +
\overline{\Psi}}$. \label{prop:TTOchar}
	\end{prop}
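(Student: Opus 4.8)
The plan is to study the single defect operator $D(A) := A - \CS A\CS^*$, since the proposition asserts precisely that membership in $\TTOs$ is detected by $D(A)$ having the stated rank-at-most-two form. Throughout I would use the two facts recorded above: that $\CS^* = S^*$ on $\modsp$ together with the explicit formula $S^* h = (h-h(0))/z$, and the reproducing property $h(0) = \ip{h}{\Ku}$. The only analytic input needed is that $\CS$ is a contraction (being $P_u S$) and that $(\CS^*)^n = (S^*)^n \to 0$ in the strong operator topology on $\modsp \subseteq H^2$, which is immediate from $\|(S^*)^n h\|^2 = \sum_{k\ge n}|\widehat{h}(k)|^2 \to 0$.

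For the forward direction, suppose $A = A_\Theta$ with $\Theta \in \ltwo$, and compute $\ip{D(A_\Theta)f}{g}$ for $f,g \in \modspbdd$. Since $\CS^* = S^*$ and $S^* g \in \modsp$, one has $\ip{\CS A_\Theta \CS^* f}{g} = \ip{\Theta\, S^*f}{S^*g}$; expanding $S^*f = (f-f(0))/z$ and $S^*g = (g-g(0))/z$ and using $|z|=1$ turns this into the integral of $\Theta(f-f(0))\overline{(g-g(0))}$. Subtracting from $\ip{A_\Theta f}{g} = \ip{\Theta f}{g}$ collapses the leading term and leaves
\[
\ip{D(A_\Theta)f}{g} = f(0)\,\ip{\Theta}{g} + \overline{g(0)}\,\ip{f}{\overline{\Theta}} - f(0)\,\overline{g(0)}\,\widehat{\Theta}(0).
\]
Rewriting $f(0)=\ip{f}{\Ku}$, $\overline{g(0)}=\ip{\Ku}{g}$, $\ip{\Theta}{g}=\ip{P_u\Theta}{g}$ and $\ip{f}{\overline\Theta}=\ip{f}{P_u\overline\Theta}$, this is exactly $D(A_\Theta)=\Phi\otimes\Ku+\Ku\otimes\Psi$ with $\Phi = P_u\Theta$ and $\Psi = P_u\overline{\Theta} - \overline{\widehat{\Theta}(0)}\,\Ku$, both lying in $\modsp$. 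The delicate point here is folding the scalar ``$\widehat{\Theta}(0)$'' correction into the second tensor factor so that both factors genuinely land in $\modsp$.

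For the converse, assume $A$ is bounded with $D(A) = R := \Phi\otimes\Ku+\Ku\otimes\Psi$, $\Phi,\Psi\in\modsp$. Telescoping gives $A = \sum_{k=0}^{n-1}\CS^k R (\CS^*)^k + \CS^n A (\CS^*)^n$, and $\|\CS^n A (\CS^*)^n f\| \le \|A\|\,\|(\CS^*)^n f\| \to 0$ shows $A = \sum_{k\ge 0}\CS^k R (\CS^*)^k$ in the strong operator topology. I would then evaluate matrix entries directly, using $\CS^k(a\otimes b)(\CS^*)^k = (\CS^k a)\otimes(\CS^k b)$:
\[
\ip{Af}{g} = \sum_{k\ge 0}\ip{R(\CS^*)^k f}{(\CS^*)^k g}.
\]
Expanding $R$ and using $\big((\CS^*)^k f\big)(0) = \widehat{f}(k)$ together with the adjoint relation $\ip{(\CS^*)^k f}{h} = \ip{f}{\CS^k h}$ and $\CS^k h = P_u(z^k h)$, the $\Phi$-piece resums (against the Fourier expansion of $f$) to $\ip{\Theta}{g}$ with $\Theta = \Phi$, and the $\Psi$-piece resums (against the expansion of $g$) to $\ip{\overline{\Psi}f}{g}$. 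Hence $\ip{Af}{g} = \ip{A_{\Phi+\overline{\Psi}}f}{g}$ on the dense subspace $\modspbdd$, so $A = A_{\Phi+\overline{\Psi}}$ and $A_{\Phi+\overline{\Psi}}\in\TTOs$.

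The principal obstacle is making this converse rigorous without assuming in advance that $A_{\Phi+\overline{\Psi}}$ is bounded: one cannot simply invoke ``same defect, hence equal'' until one knows a bounded candidate exists. The telescoping representation is exactly what circumvents this, since it reconstructs the given bounded $A$ intrinsically from $R$, and the Fourier resummation identifies the action on $\modspbdd$ with $P_u M_{\Phi+\overline{\Psi}}$; boundedness of $A_{\Phi+\overline{\Psi}}$ is then a conclusion rather than a hypothesis. The secondary point is justifying the interchange of the infinite sum with the inner product, which is legitimate because the partial sums are $Af - \CS^n A(\CS^*)^n f$ and the remainder tends to $0$ by the strong convergence $(\CS^*)^n\to 0$.
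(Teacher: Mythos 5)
Your proof is correct, but note the comparison target here is unusual: the paper itself gives no proof of this proposition --- it is quoted as Theorem 4.1 of Sarason's paper, with only a citation. Measured against that source, your necessity half is essentially the standard computation: pairing $A_\Theta - \CS A_\Theta \CS^*$ against $f,g\in\modspbdd$, using $\CS^*=S^*$ and $|z|=1$ on $\torus$; your observation that the scalar term $\widehat{\Theta}(0)$ must be absorbed as $-\overline{\widehat{\Theta}(0)}\,\Ku$ into the second tensor factor is exactly what makes both factors land in $\modsp$, as the statement requires. Your sufficiency half, via the telescoping identity $A=\sum_{k=0}^{n-1}\CS^k R(\CS^*)^k+\CS^nA(\CS^*)^n$ together with $(\CS^*)^n=(S^*)^n\to 0$ strongly, is a clean self-contained route that, as you say, sidesteps the a priori unboundedness of $A_{\Phi+\overline{\Psi}}$ and simultaneously yields the ``in which case'' clause for an \emph{arbitrary} representation of the defect. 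Two steps deserve an explicit line. First, your resummation uses $\CS^k h=P_u(z^kh)$ for $h\in\modsp$, i.e.\ $(A_z)^k=A_{z^k}$ on $\modsp$; this is true but needs the remark that $z\,uH^2\subseteq uH^2$, so the part of $z^k h$ annihilated by $P_u$ at each stage stays annihilated at later stages. Second, the telescoped series converges because its partial sums equal $\ip{(A-\CS^nA(\CS^*)^n)f}{g}$, but splitting it into the $\Phi$-piece $\sum_k\widehat{f}(k)\ip{z^k\Phi}{g}$ and the $\Psi$-piece $\sum_k\overline{\widehat{g}(k)}\ip{f}{z^k\Psi}$ requires each piece to converge on its own; this is where boundedness of $f$ and $g$ enters, since $\Phi\overline{g}$ and $f\overline{\Psi}$ then lie in \ltwo, so the coefficients $\ip{z^k\Phi}{g}$ and $\ip{f}{z^k\Psi}$ are square-summable in $k$, Cauchy--Schwarz gives absolute convergence, and Parseval identifies the sums with $\ip{\Phi f}{g}$ and $\ip{\overline{\Psi}f}{g}$. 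With those two remarks added, the argument is complete: $Af=A_{\Phi+\overline{\Psi}}f$ on the dense set $\modspbdd$, and boundedness of $A$ extends this to all of $\modsp$.
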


	Thus we have a way of finding a symbol for a TTO, but TTOs do not have unique symbols. We say that $\Phi \TTOeq \Psi$ if and only if $A_\Phi = A_\Psi$.
	
	The following is a necessary and sufficient condition for a TTO with symbol in $\modsp + \overline{\modsp}$ to equal zero.
	\begin{prop}
	 	Let $\varphi_1, \varphi_2\in\modsp$. Then $A_{\varphi_1 +
\overline{\varphi_2}} = 0$ if and only if
		$\varphi_1 = c\Ku$ and $\varphi_2 = -\overline{c}\Ku$ for some
$c\in\plane$. \label{prop:TTOhardzerocond}
	\end{prop}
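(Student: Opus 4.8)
The plan is to use Proposition~\ref{prop:TTOzerocond} as the bridge between the two characterizations. That proposition tells us that $A_{\varphi_1 + \overline{\varphi_2}} = 0$ exactly when $\varphi_1 + \overline{\varphi_2} \in uH^2 + \overline{uH^2}$. So the entire problem reduces to understanding precisely when a function of the form $\varphi_1 + \overline{\varphi_2}$, with both $\varphi_1,\varphi_2 \in \modsp$, can land in $uH^2 + \overline{uH^2}$. The reverse direction is the easy one: if $\varphi_1 = c\Ku$ and $\varphi_2 = -\overline{c}\Ku$, I would simply verify directly that $c\Ku - \overline{c}\,\overline{\Ku}$ lies in $uH^2 + \overline{uH^2}$. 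The natural way to do this is to recall that $\Ku = 1 - \overline{u(0)}u$, so $c\Ku = c - c\overline{u(0)}u$ and $c\overline{u(0)}u \in uH^2$, and then check the conjugate term symmetrically; the constant pieces should cancel or be absorbed appropriately.

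For the forward direction, I would start from the hypothesis $\varphi_1 + \overline{\varphi_2} = ug + \overline{uh}$ for some $g,h \in H^2$. The key structural fact I would exploit is the orthogonal decomposition $\ltwo = uH^2 \oplus \modsp \oplus \overline{zH^2}$ (equivalently $H^2 = uH^2 \oplus \modsp$, together with $\ltwo = H^2 \oplus \overline{zH^2}$). Since $\varphi_1 \in \modsp \subseteq H^2$ and $\overline{\varphi_2} \in \overline{H^2}$, I would expand both sides in terms of these summands and match components. The term $\overline{\varphi_2}$, being the conjugate of a $\modsp$-function, decomposes in a controlled way; in particular I expect the constant term (the $H^2 \cap \overline{H^2}$ overlap) to play the pivotal role, since the only functions common to $H^2$ and its conjugate are the constants.

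The main obstacle I anticipate is the careful bookkeeping at the overlap between $H^2$ and $\overline{H^2}$, namely the constants. The equation $\varphi_1 + \overline{\varphi_2} = ug + \overline{uh}$ must be analyzed by projecting onto $\modsp$, onto $uH^2$, and onto $\overline{zH^2}$. Projecting onto $uH^2$ kills $\varphi_1$ and the $\overline{uh}$ piece but not $ug$, which should force $g$ into a rigid form; symmetrically, projecting onto $\overline{zH^2}$ constrains $h$. The heart of the argument is showing that the only freedom remaining is a single scalar $c$, and that the constraint couples $\varphi_1$ and $\varphi_2$ antisymmetrically through $c$ and $\overline{c}$. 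I would track how the reproducing kernel $\Ku = 1 - \overline{u(0)}u$ emerges as the unique (up to scalar) $\modsp$-function whose conjugate-plus-itself combination can be absorbed into $uH^2 + \overline{uH^2}$, which is precisely the statement that $\Ku$ spans the relevant one-dimensional space of solutions.

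Concretely, I would write the condition as $\varphi_1 - ug = \overline{uh} - \overline{\varphi_2}$, where the left side lies in $H^2$ and the right side lies in $\overline{H^2}$; hence both equal a common constant $c$. This immediately gives $\varphi_1 = ug + c$ and $\varphi_2 = uh - \overline{c}$. Since $\varphi_1 \in \modsp \perp uH^2$, I would take the inner product of $\varphi_1 = ug + c$ against $uH^2$ to pin down $g$, using $\ip{c}{u\cdot z^n}$ and the fact that $u\in H^\infty$; the constant $c$ forces $g$ to be a specific multiple of a fixed function, yielding $\varphi_1 = c\Ku$, and the parallel computation on $\varphi_2$ yields $\varphi_2 = -\overline{c}\Ku$, completing the proof.
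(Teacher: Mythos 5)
Your proposal is correct, and it takes a genuinely different route from the paper's. The paper stays at the operator level: from $A_{\varphi_1+\overline{\varphi_2}}=0$ it invokes the defect identity of Proposition~\ref{prop:TTOchar} to get $0 = A - \CS A\CS^* = \varphi_1\otimes\Ku + \Ku\otimes\varphi_2$, and then rank-one linear algebra (the range of the first term is spanned by $\varphi_1$, that of the second by $\Ku$) forces $\varphi_1 = c\Ku$ and in turn $\varphi_2 = -\overline{c}\Ku$. You work instead at the symbol level, reducing via Proposition~\ref{prop:TTOzerocond} to deciding when $\varphi_1+\overline{\varphi_2} = ug+\overline{uh}$ with $g,h\in H^2$, and your key steps are sound: $\varphi_1 - ug = \overline{uh-\varphi_2}$ lies in $H^2\cap\overline{H^2} = \plane$, hence equals a constant $c$; then, since $\varphi_1\perp uH^2$, the computation $\ip{ug+c}{uf} = \ip{g}{f} + c\overline{u(0)}\ip{1}{f}$ for all $f\in H^2$ (using $\ip{c}{uf} = c\,\overline{u(0)f(0)}$) pins down $g\equiv -c\overline{u(0)}$, so $\varphi_1 = c(1-\overline{u(0)}u) = c\Ku$, and the parallel computation gives $h\equiv \overline{c}\,\overline{u(0)}$ and $\varphi_2 = -\overline{c}\Ku$. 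What each buys: your argument is more elementary and self-contained, needing only Sarason's kernel theorem (Proposition~\ref{prop:TTOzerocond}) and Hardy-space orthogonality rather than the deeper defect characterization, and it makes transparent why $\Ku = 1-\overline{u(0)}u$ is the unique (up to scalar) obstruction; the paper's proof is shorter and keeps to the $A - \CS A\CS^*$ calculus on which the rest of the paper runs (compare Lemma~\ref{lem:TTOhardprodcondition}). One slip to correct in your easy direction: with $\varphi_2 = -\overline{c}\Ku$ the symbol is $\varphi_1+\overline{\varphi_2} = c\Ku - c\overline{\Ku}$, not $c\Ku - \overline{c}\,\overline{\Ku}$. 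In the correct expression the constants cancel exactly, leaving $-c\overline{u(0)}u + cu(0)\overline{u}\in uH^2+\overline{uH^2}$; your mis-conjugated version has constant term $c-\overline{c}$, which for non-real $c$ is a nonzero constant and hence not in $uH^2+\overline{uH^2}$ (a nonzero constant symbol gives a nonzero multiple of the identity).
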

	
	\begin{proof}
	 	Let $\varphi_1 = c\Ku$ and $\varphi_2 = -\overline{c}\Ku$. Then
\[A_{\varphi_1 + \overline{\varphi_2}} = A_{c\Ku - c\overline{\Ku}} =
A_{cu(z)\overline{u(0)}-c\overline{u(z)}u(0)}\] so $A_{\varphi_1 +
\overline{\varphi_2}} = 0$.

		Now suppose $A_{\varphi_1 + \overline{\varphi_2}} = 0$. Then $A - \CS A \CS^* = 0 = \varphi_1 \otimes\Ku + \Ku\otimes\varphi_2$, so $\varphi_1 = c \Ku$ for some $c\in\plane$. Hence $c \Ku\otimes\Ku + \Ku\otimes\varphi_2 = 0$ and so $\varphi_2 = - c\Ku$ as required. 
	\end{proof}
	 
		Since $I = A_{\Ku}$ we can compute the identities \begin{equation}I - \CS\CS^* = \Ku\otimes\Ku~\label{eqn:I-SS*}\end{equation} and \begin{equation}I - \CS^*\CS = \CKu\otimes\CKu~\label{eqn:I-S*S}\end{equation} from which it follows that \begin{equation}\label{eqn:SCSC} \SC{\SC{\varphi}} = \CS\CS^*\varphi = \varphi - \varphi(0)\Ku\end{equation} for all $\varphi\in\modsp$.

	The following identities are Lemma 2.2 of ~\cite{sarason}.
	\begin{prop}\label{prop:CSKu}\mbox{ }
	 \begin{enumerate}
		\item If $\lambda\in\disc$,  \[\CS^*\Ku[\lambda] = \overline{\lambda}\Ku[\lambda] - 
												\overline{u(\lambda)}\CKu\] and \[\ \CS\CKu[\lambda] = \lambda\CKu[\lambda] - u(\lambda)\Ku . \]
		\item If $\lambda\in\disc$ is nonzero, 
				\[
					\CS\Ku[\lambda] = \frac{1}{\overline{\lambda}}\left(\Ku[\lambda] - \Ku\right)\] and \[\ \CS^*\CKu[\lambda] = \frac{1}{\lambda}\left(\CKu[\lambda]-\CKu\right)
				.\]

		\item These equalities all hold for $\lambda\in\torus$ such that $u$ has an ADC at $\lambda$.
	 \end{enumerate}
	\end{prop}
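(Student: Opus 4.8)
The plan is to prove the first identity of part (1) by a direct backward-shift computation, obtain the second identity of each part by applying the conjugation $C$, and deduce part (2) from part (1) using the structural identities already recorded; part (3) will then follow by a continuity argument. Throughout I will use the fact that the conjugate kernel is the conjugation of the ordinary kernel, $\CKu[\lambda] = C\Ku[\lambda]$. This is a one-line check from the definitions: on $\torus$ we have $C\Ku[\lambda] = u\overline{z\Ku[\lambda]}$, and substituting $\overline{z}=1/z$ and $\overline{u}=1/u$ into the explicit formula for $\Ku[\lambda]$ collapses this to $\frac{u(z)-u(\lambda)}{z-\lambda}=\CKu[\lambda]$. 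In particular $C\CKu = \Ku$, since $C$ is an involution.

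For the first identity of (1), recall that $\CS^* = S^*$ on $\modsp$, so that $\CS^*\Ku[\lambda] = S^*\Ku[\lambda]$, where $S^*f = (f-f(0))/z$. Writing $\Ku[\lambda] = K_\lambda - \overline{u(\lambda)}uK_\lambda$ with $K_\lambda=(1-\overline{\lambda}z)^{-1}$ the Szego kernel, I would carry out the computation using the two elementary facts $S^*K_\lambda = \overline{\lambda}K_\lambda$ (immediate from $K_\lambda(0)=1$) and $(u-u(0))/z = \CKu$. Concretely, $S^*(uK_\lambda) = (uK_\lambda - u(0))/z = u\,\frac{K_\lambda-1}{z} + \frac{u-u(0)}{z} = \overline{\lambda}uK_\lambda + \CKu$, whence $\CS^*\Ku[\lambda] = \overline{\lambda}K_\lambda - \overline{u(\lambda)}\bigl(\overline{\lambda}uK_\lambda + \CKu\bigr) = \overline{\lambda}\Ku[\lambda] - \overline{u(\lambda)}\CKu$, as claimed.

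The second identity of (1) is then immediate by conjugation: since $\CS$ is $C$-symmetric we have $C\CS^*C = \CS$, so applying $C$ to the first identity and using $C\Ku[\lambda] = \CKu[\lambda]$, $C\CKu = \Ku$, and the antilinearity of $C$ yields $\CS\CKu[\lambda] = \lambda\CKu[\lambda] - u(\lambda)\Ku$. To obtain the first identity of (2), I apply $\CS$ to the first identity of (1) and invoke $\CS\CKu = -u(0)\Ku$ together with $\CS\CS^* = I - \Ku\otimes\Ku$ from~\eqref{eqn:I-SS*}; since $\ip{\Ku[\lambda]}{\Ku} = \Ku[\lambda](0) = 1-\overline{u(\lambda)}u(0)$, the two rank-one correction terms $\overline{u(\lambda)}u(0)\Ku$ cancel, leaving $\overline{\lambda}\,\CS\Ku[\lambda] = \Ku[\lambda]-\Ku$, valid for $\lambda\neq 0$. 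Conjugating this identity exactly as before gives the second identity of (2).

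The only genuine subtlety is part (3). For $\zeta\in\torus$ at which $u$ has an ADC, the kernels $\Ku[\zeta]$ and $\CKu[\zeta]$ lie in $\modsp$ and are the norm limits of $\Ku[\lambda]$ and $\CKu[\lambda]$ as $\lambda\to\zeta$ nontangentially, while $u(\lambda)\to u(\zeta)$. Because $\CS$ and $\CS^*$ are bounded, hence continuous, passing to the limit in the four identities (all coefficients being continuous, and with $\overline{\lambda}\to\overline{\zeta}\neq 0$ in (2)) produces the same identities at $\zeta$. I expect the main obstacle to be purely a matter of bookkeeping: confirming the norm convergence $\Ku[\lambda]\to\Ku[\zeta]$ and $\CKu[\lambda]\to\CKu[\zeta]$ that underwrites this limiting argument, which is precisely the characterization of ADC points recalled in Section~2.
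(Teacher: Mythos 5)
Your proposal is correct, and there is a wrinkle in the comparison: the paper itself offers no proof of this proposition — it is imported verbatim as Lemma 2.2 of \cite{sarason} — so the only internal material to check against consists of the facts recorded in Section 2, all of which you use legitimately ($\CS^*=S^*$ on \modsp, $\CS\CKu=-u(0)\Ku$, the $C$-symmetry of $\CS$, and Equation~(\ref{eqn:I-SS*})). Your route is essentially the standard derivation: the identity $C\Ku[\lambda]=\CKu[\lambda]$ checks out exactly as you say, the $S^*$ computation on the decomposition $\Ku[\lambda]=K_\lambda-\overline{u(\lambda)}uK_\lambda$ is correct, conjugation transfers each identity to its partner since $C\CS^* C=\CS$ and $C$ is an antilinear isometry, and in part (2) the cancellation works: applying $\CS$ to part (1) gives $\overline{\lambda}\CS\Ku[\lambda]=\Ku[\lambda]-\ip{\Ku[\lambda]}{\Ku}\Ku+\overline{u(\lambda)}u(0)\Ku=\Ku[\lambda]-\Ku$, using $\ip{\Ku[\lambda]}{\Ku}=1-\overline{u(\lambda)}u(0)$. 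On the one point you flag as a potential obstacle in part (3), the bookkeeping does go through: at an ADC point $\zeta$ the kernels $\Ku[\lambda]$ converge to $\Ku[\zeta]$ weakly (evaluations converge and norms stay bounded on a nontangential approach), and $\|\Ku[\lambda]\|^2=\frac{1-|u(\lambda)|^2}{1-|\lambda|^2}\to|u'(\zeta)|=\|\Ku[\zeta]\|^2$ by the Julia--Carath\'eodory theorem, so the convergence is in fact in norm; $\CKu[\lambda]=C\Ku[\lambda]\to\CKu[\zeta]$ then follows because $C$ is isometric. (Indeed, weak convergence alone would already suffice to pass to the limit, since $\CS$ and $\CS^*$ are bounded and the scalar coefficients $\lambda$, $u(\lambda)$ converge.) So the argument is complete and self-contained, which is arguably a small improvement over the paper's bare citation.
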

	
\section{Generalized Shifts}

We now define the generalized compressed shift operator. Our definition follows Sarason's definition in Section $14$ of~\cite{sarason}.
\begin{defn}\label{defn:CS}
	Let $\alpha\in\cdisc$. Then $\CSalpha = \CS + \frac{\alpha}{1-\alpha\overline{u(0)}}\Ku\otimes\CKu$. 
\end{defn}

Again, we can think about the generalized shift as follows. If $f\in\modsp$ and $f\perp \CKu$, then $\CSalpha f = zf$. On the other hand, \begin{align*}\CSalpha\CKu &= \CS\CKu + \frac{\alpha\ip{\CKu}{\CKu}}{1-\alpha\overline{u(0)}}\Ku\\ &= -u(0)\Ku + \frac{\alpha(1-|u(0)|^2)}{1-\alpha\overline{u(0)}}\Ku \\&= \frac{\alpha - u(0)}{1-\alpha\overline{u(0)}}\Ku.\end{align*}

The corollary to Theorem 10.1 in~\cite{sarason} states that if a bounded operator $A$ on $\modsp$ is in $\cmt{\CSalpha}$ then $A$ is in $\TTOs$. The following proof gives us the symbol of any operator in $\cmt{\CSalpha}$.
\begin{prop}\label{prop:CSalphaCommutant}
	Let $\alpha\in\cdisc$. If $A$ is a bounded operator that commutes with $\CSalpha$ then $A$ is in $\TTOs$ and has a symbol $\varphi+\alpha\overline{\SC{\varphi}}$ where $\varphi = A\Ku(1-\alpha\overline{u(0)})^{-1}$. 
\end{prop}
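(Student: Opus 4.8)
The plan is to produce an explicit symbol for $A$ by computing the defect $A - \CS A\CS^*$ and invoking Proposition~\ref{prop:TTOchar}, which guarantees that this defect has the form $\Phi\otimes\Ku + \Ku\otimes\Psi$ and that then $A = A_{\Phi+\overline{\Psi}}$. The crucial extra ingredient is that, since $A$ commutes with $\CSalpha$, it already lies in $\TTOs$ by the corollary to Theorem 10.1 recalled above, so $A$ is $C$-symmetric: $A^* = CAC$. This is what will let me eliminate $A^*$ in favor of $A$.

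First I would set $c = \alpha(1-\alpha\overline{u(0)})^{-1}$, so that $\CSalpha = \CS + c\,\Ku\otimes\CKu$, and unwind $A\CSalpha = \CSalpha A$ into
\[
	A\CS - \CS A = c\,\Ku\otimes(A^*\CKu) - c\,(A\Ku)\otimes\CKu,
\]
using the rank-one rules $A(f\otimes g) = (Af)\otimes g$ and $(f\otimes g)A = f\otimes(A^*g)$. Then, writing $I = \CS\CS^* + \Ku\otimes\Ku$ from~\eqref{eqn:I-SS*}, I would expand
\[
	A - \CS A\CS^* = (A\CS - \CS A)\CS^* + (A\Ku)\otimes\Ku
\]
and push $\CS^*$ through the displayed commutator on the right via $(f\otimes g)\CS^* = f\otimes\CS g$.

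The computation collapses using two facts. $C$-symmetry gives $A^*\CKu = (CAC)(C\Ku) = C(A\Ku) = \widetilde{A\Ku}$, and since $A\Ku = (1-\alpha\overline{u(0)})\varphi$ the antilinearity of $C$ yields $\CS(A^*\CKu) = (1-\overline{\alpha}u(0))\SC{\varphi}$; carrying that scalar out of the second slot of the rank-one operator (where it conjugates) turns $c\,\Ku\otimes\CS(A^*\CKu)$ into exactly $\alpha\,\Ku\otimes\SC{\varphi}$, because $c(1-\alpha\overline{u(0)}) = \alpha$. Meanwhile the identity $\CS\CKu = -u(0)\Ku$ turns $-c\,(A\Ku)\otimes\CS\CKu$ into $c\overline{u(0)}\,(A\Ku)\otimes\Ku$, which combines with the leftover $(A\Ku)\otimes\Ku$ through $1 + c\overline{u(0)} = (1-\alpha\overline{u(0)})^{-1}$ to give precisely $\varphi\otimes\Ku$. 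Collecting the pieces,
\[
	A - \CS A\CS^* = \varphi\otimes\Ku + \alpha\,\Ku\otimes\SC{\varphi} = \varphi\otimes\Ku + \Ku\otimes(\overline{\alpha}\,\SC{\varphi}),
\]
whence Proposition~\ref{prop:TTOchar} reads off the symbol as $\varphi + \overline{\overline{\alpha}\,\SC{\varphi}} = \varphi + \alpha\overline{\SC{\varphi}}$.

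I expect the difficulty to be bookkeeping rather than conceptual. Because $C$ is antilinear and because a scalar in the second argument of $f\otimes g$ emerges conjugated, it is easy to misplace a bar, and the two scalar reductions $c(1-\alpha\overline{u(0)}) = \alpha$ and $1 + c\overline{u(0)} = (1-\alpha\overline{u(0)})^{-1}$ must land exactly on the target coefficients $\alpha$ and $1$. (That $1-\alpha\overline{u(0)}\neq 0$ is automatic, since $|\alpha|\le 1$ while $|u(0)|<1$ for non-constant inner $u$.) All the real content sits in the single observation that $C$-symmetry rewrites $A^*\CKu$ in terms of the one vector $A\Ku$ that defines $\varphi$.
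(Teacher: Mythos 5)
Your proof is correct and takes essentially the same route as the paper's: both unwind $A\CSalpha = \CSalpha A$ into the rank-one commutator identity, compute $A - \CS A \CS^*$ via $I - \CS\CS^* = \Ku\otimes\Ku$ and $\CS\CKu = -u(0)\Ku$, use the $C$-symmetry of $A$ (justified by the corollary to Sarason's Theorem 10.1) to replace $A^*\CKu$ with $\widetilde{A\Ku}$, and read off the symbol from Proposition~\ref{prop:TTOchar}. The only difference is cosmetic ordering: the paper substitutes $A^*\CKu = \widetilde{A\Ku}$ at the outset, while you defer it to the final scalar bookkeeping.
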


\begin{proof}
  First note that \begin{equation}\label{eqn:ACSalpha}A\CSalpha = A\CS + \frac{\alpha}{1 -
\alpha\overline{u(0)}}\left(A\Ku\right)\otimes\CKu\end{equation} and \begin{align}\CSalpha A &= \CS A +
\frac{\alpha}{1 - \alpha\overline{u(0)}}\Ku\otimes\left(A^*\CKu\right)\nonumber\\ &= \CS A +
\frac{\alpha}{1 -
\alpha\overline{u(0)}}\Ku\otimes\left(\widetilde{A\Ku}\right)\label{eqn:CSalphaA}.\end{align} If $A$ and $\CSalpha$ commute then we can use Equations (\ref{eqn:ACSalpha}) and (\ref{eqn:CSalphaA}) to see that 
\begin{align*}
	\CS A &= \CSalpha A - \frac{\alpha}{1-\alpha\overline{u(0)}}\Ku\otimes\left(\widetilde{A\Ku}\right)\\
		&= A \CSalpha - \frac{\alpha}{1-\alpha\overline{u(0)}}\Ku\otimes\left(\widetilde{A\Ku}\right)\\
		&=  A\CS + \frac{\alpha}{1 -
\alpha\overline{u(0)}}\left(A\Ku\right)\otimes\CKu- \frac{\alpha}{1-\alpha\overline{u(0)}}\Ku\otimes\left(\widetilde{A\Ku}\right).
\end{align*}

It follows that
  \begin{align*}
    A - \CS A \CS^* ={}& A - A \CS \CS^* - \frac{\alpha}{1 -
\alpha\overline{u(0)}}A\Ku\otimes\SC{\Ku} \\&+ \frac{\alpha}{1 -
\alpha\overline{u(0)}}\Ku\otimes\SC{A\Ku}\\
    ={}& A\Ku\otimes\Ku + \frac{\overline{u(0)}\alpha}{1 -
\alpha\overline{u(0)}}A\Ku\otimes\Ku \\&+ \frac{\alpha}{1 -
\alpha\overline{u(0)}}\Ku\otimes\SC{A\Ku}\\
    ={}& \frac{A\Ku}{1-\alpha\overline{u(0)}}\otimes\Ku \\&+
\Ku\otimes\overline{\alpha}\CS
C\left({\frac{A\Ku}{1-\alpha\overline{u(0)}}}\right).
  \end{align*}
The conclusion then follows from Proposition~\ref{prop:TTOchar}.
\end{proof}

\begin{cor}
Let $A$ be a bounded opeator that commutes with ${\CSalpha}^*$, for $\alpha\in\cdisc$. Then $A$ is in $\TTOs$ and has a symbol of the form $\overline{\alpha}\psi + \overline{\SC{\psi}} + c$ for $\psi\in\modsp$ and $c\in\plane$.
\end{cor}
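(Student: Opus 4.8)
The plan is to dualize Proposition~\ref{prop:CSalphaCommutant}. Observe that $A$ commutes with $\CSalpha^*$ precisely when $A^*$ commutes with $\CSalpha$: taking adjoints of $A\CSalpha^* = \CSalpha^* A$ gives $\CSalpha A^* = A^*\CSalpha$. Since $A$ is bounded, so is $A^*$, and I would apply the Proposition directly to $A^*$. It yields that $A^* \in \TTOs$ with symbol $\varphi + \alpha\overline{\SC{\varphi}}$, where $\varphi = A^*\Ku(1-\alpha\overline{u(0)})^{-1} \in \modsp$. Because $\TTOs$ is closed under adjoints (as $A_\Phi^* = A_{\overline{\Phi}}$ and an operator is bounded iff its adjoint is), it follows that $A = (A^*)^* \in \TTOs$, and using $\overline{\alpha\overline{\SC{\varphi}}} = \overline{\alpha}\SC{\varphi}$ we obtain $A = A_{\overline{\varphi} + \overline{\alpha}\SC{\varphi}}$.

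It then remains to rewrite the symbol $\overline{\varphi} + \overline{\alpha}\SC{\varphi}$ in the asserted form $\overline{\alpha}\psi + \overline{\SC{\psi}} + c$. The natural guess is to set $\psi = \SC{\varphi}$, which lies in $\modsp$ since both $C$ and $\CS$ preserve $\modsp$; then the term $\overline{\alpha}\psi$ reproduces $\overline{\alpha}\SC{\varphi}$ exactly. To handle the remaining term I would invoke Equation~(\ref{eqn:SCSC}): $\SC{\psi} = \SC{\SC{\varphi}} = \varphi - \varphi(0)\Ku$, so that $\overline{\SC{\psi}} = \overline{\varphi} - \overline{\varphi(0)}\,\overline{\Ku}$. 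Combining these,
\[\overline{\alpha}\psi + \overline{\SC{\psi}} = \overline{\alpha}\SC{\varphi} + \overline{\varphi} - \overline{\varphi(0)}\,\overline{\Ku},\]
which agrees with the symbol of $A$ except for the spurious term $-\overline{\varphi(0)}\,\overline{\Ku}$.

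The final step is to absorb that term into the constant $c$. Writing $\Ku = 1 - \overline{u(0)}u$ gives $\overline{\Ku} = 1 - u(0)\overline{u}$, and since $\overline{u} \in \overline{uH^2}$ we have $A_{\overline{u}} = 0$ by Proposition~\ref{prop:TTOzerocond}; hence $A_{\overline{\Ku}} = A_1 = I$, i.e.\ $\overline{\Ku} \TTOeq 1$. Therefore choosing $c = \overline{\varphi(0)}$ makes $-\overline{\varphi(0)}\,\overline{\Ku} + c \TTOeq -\overline{\varphi(0)} + \overline{\varphi(0)} = 0$, so that $\overline{\alpha}\psi + \overline{\SC{\psi}} + c \TTOeq \overline{\varphi} + \overline{\alpha}\SC{\varphi}$ is a symbol for $A$. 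This exhibits $A = A_{\overline{\alpha}\psi + \overline{\SC{\psi}} + c}$ with $\psi = \SC{\varphi} \in \modsp$ and $c \in \plane$, as required.

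I would expect the only genuine subtlety to be the bookkeeping around the conjugation $C$ and the operation $\SC{\cdot}$—in particular, recognizing that the double application $\SC{\SC{\cdot}}$ collapses through Equation~(\ref{eqn:SCSC})—together with the observation that $\overline{\Ku}$ is itself a symbol for the identity, which is what permits the leftover constant-multiple-of-$\overline{\Ku}$ discrepancy to be swept into $c$. Everything else is a mechanical transfer of Proposition~\ref{prop:CSalphaCommutant} across the adjoint.
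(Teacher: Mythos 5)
Your proof is correct and takes essentially the same route as the paper's own: pass to $A^*$, apply Proposition~\ref{prop:CSalphaCommutant}, set $\psi = \SC{\varphi}$, and collapse $\SC{\SC{\varphi}}$ via Equation~(\ref{eqn:SCSC}); the only addition is that you explicitly justify absorbing the leftover $-\overline{\varphi(0)}\,\overline{\Ku}$ into the constant via $\overline{\Ku} \TTOeq 1$, a step the paper leaves implicit. You also correctly carry the factor $(1-\alpha\overline{u(0)})^{-1}$ in the definition of $\varphi$, where the paper's proof has a typographical slip writing $(1-\alpha\overline{u(0)})$ instead.
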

\begin{proof}
$A^*$ commutes with $\CSalpha$ and therefore has symbol $\varphi + \alpha\overline{\SC{\varphi}}$ where $\varphi = A^*\Ku (1-\alpha\overline{u(0)})$ by the previous proposition. Therefore $A$ has symbol $\overline{\alpha}\SC{\varphi} + \overline{\varphi}$. Define $\psi = \SC{\varphi}$. Then by Equation~\ref{eqn:SCSC} $\SC{\psi} = \SC{\SC{\varphi}} = \varphi - \varphi(0)\Ku$ and $\overline{\alpha}\psi + \overline{\SC{\psi}}+ \overline{\varphi(0)}$ is a symbol for $A$. 
\end{proof}

Suppose $A_\Phi, A_\Psi$ are in $\TTOs$ and both commute with $\CSalpha$ for some $\alpha\in\cdisc$. Then their product $A_\Phi A_\Psi$ also commutes with $\CSalpha$, and is therefore also in $\TTOs$. So we know of two cases when the product of two operators in $\TTOs$ is itself in $\TTOs$ --- when both operators commute with some $\CSalpha$ or ${\CSalpha}^*$, or when one of the operators is $A_c = cI$ for some $c\in\plane$. We will show in Section~\ref{section:algebras} that these are the only cases where the product of two operators in $\TTOs$ is itself in $\TTOs$.

\section{TTOs of type $\alpha$}

If $A_\Phi$ is in $\TTOs$ and commutes with $\CSalpha$, then $A_{\Phi + c}$ also commutes with $\CSalpha$ for all $c\in\plane$. If $\alpha\in\cdisc\setminus\{0\}$, then $\overline{\alpha}^{-1}\in\plane\setminus\disc$, and by the corollary to Proposition~\ref{prop:CSalphaCommutant} any operator in $\TTOs$ which commutes with ${\CSalpha}^*$ has a symbol of the form $\psi + \overline{\alpha}^{-1}\overline{\SC{\psi}} + c$ with $\psi\in\modsp$ and $c\in\plane$. We therefore make the following definition.

\begin{defn}
	 An operator $A\in\TTOs$ is said to be a TTO of type $\alpha$ for $\alpha\in\plane$ if $A$ has a symbol of the form $\varphi + \alpha\overline{\SC{\varphi}} + c$, where $\varphi\in\modsp$ and $c\in\plane$. Note that an operator in $\TTOs$ is of type $0$ if and only if it has a holomorphic symbol. We say an operator in $\TTOs$ is of type $\infty$ if it has an antiholomorphic symbol.
\end{defn}

\begin{prop}\label{prop:inBalpha}Let $A:=A_{\varphi_1 +
\overline{\varphi_2}}$ be in $\TTOs$, where $\varphi_i\in\modsp$.\begin{enumerate}
		\item If $\alpha\in\plane$, then $A$ is of type $\alpha$ if and only if $\overline{\alpha}\SC{\varphi_1}-\varphi_2 \in \spKu$.
		\item $A$ is of type $\infty$ if and only if $\varphi_1\in\spKu$ if and only if $\SC{\varphi_1}\in\spKu$.
	\end{enumerate}
	\end{prop}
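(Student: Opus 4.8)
The plan is to translate the statement ``$A$ is of type $\alpha$'' into an equality of symbols and then reduce it to the zero-symbol criterion of Proposition~\ref{prop:TTOhardzerocond}. By definition $A$ is of type $\alpha$ precisely when $A_{\varphi_1+\overline{\varphi_2}} = A_{\varphi + \alpha\overline{\SC{\varphi}} + c}$ for some $\varphi\in\modsp$ and $c\in\plane$. To compare these two symbols I would first rewrite the candidate symbol in the standard form ``element of $\modsp$ plus the conjugate of an element of $\modsp$''. Since $1 = \Ku + \overline{u(0)}u$ and $uH^2$ contributes nothing to a symbol (Proposition~\ref{prop:TTOzerocond}), the constant $c$ may be replaced by $c\Ku$, while $\alpha\overline{\SC{\varphi}}$ is already the conjugate of $\overline{\alpha}\SC{\varphi}\in\modsp$. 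Thus $A$ is of type $\alpha$ iff $A_{(\varphi_1 - \varphi - c\Ku) + \overline{(\varphi_2 - \overline{\alpha}\SC{\varphi})}} = 0$, and by Proposition~\ref{prop:TTOhardzerocond} this holds iff there is a $d\in\plane$ with
\[
\varphi = \varphi_1 - (c+d)\Ku \quad\text{and}\quad \varphi_2 = \overline{\alpha}\SC{\varphi} - \overline{d}\Ku .
\]

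The second step is to eliminate $\varphi$. Writing $e = c+d$ and using the antilinearity of $C$ together with the identity $\SC{\Ku} = \CS\CKu = -u(0)\Ku$, I obtain $\SC{\varphi} = \SC{\varphi_1} + \overline{e}\,u(0)\Ku$, so that
\[
\overline{\alpha}\SC{\varphi_1} - \varphi_2 = \bigl(\overline{d} - \overline{\alpha}\,\overline{e}\,u(0)\bigr)\Ku .
\]
Because $d$ ranges over all of $\plane$ independently of $e$, the scalar on the right is arbitrary; hence a pair $(\varphi,c)$ exists if and only if $\overline{\alpha}\SC{\varphi_1} - \varphi_2$ is a scalar multiple of $\Ku$, which is exactly the assertion of part (1). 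The forward direction is read off the displayed equation, and for the converse, given $\overline{\alpha}\SC{\varphi_1}-\varphi_2 = k\Ku$ one takes $e=0$, $d=\overline{k}$, and $\varphi = \varphi_1$.

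For part (2), having an antiholomorphic symbol means $A = A_{\overline{\psi}}$ for some $\psi\in\modsp$ (any antiholomorphic symbol reduces to one of this form modulo $\overline{uH^2}$). The same reduction then shows $A$ is of type $\infty$ iff $A_{\varphi_1 + \overline{(\varphi_2 - \psi)}} = 0$; Proposition~\ref{prop:TTOhardzerocond} forces $\varphi_1 = d\Ku$, while the equation $\varphi_2 - \psi = -\overline{d}\Ku$ merely solves for $\psi$. Hence $A$ is of type $\infty$ iff $\varphi_1\in\spKu$. The remaining equivalence $\varphi_1\in\spKu \Leftrightarrow \SC{\varphi_1}\in\spKu$ follows from~\eqref{eqn:SCSC}: one direction is the computation $\SC{c\Ku} = -\overline{c}\,u(0)\Ku$, and for the other, applying $\SC{\,\cdot\,}$ to $\SC{\varphi_1} = c\Ku$ gives $\varphi_1 - \varphi_1(0)\Ku = \SC{\SC{\varphi_1}} = -\overline{c}\,u(0)\Ku\in\spKu$, whence $\varphi_1\in\spKu$.

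The main obstacle I anticipate is bookkeeping rather than conceptual: one must track the antilinearity of the map $\varphi\mapsto\SC{\varphi} = \CS C\varphi$ with care, reduce the free constant $c$ correctly modulo $uH^2$, and notice that although two scalar parameters $c$ and $d$ are introduced, they collapse so that only their manifestation as the coefficient of $\Ku$ remains free. The identity $\SC{\Ku} = -u(0)\Ku$ is what makes the $\SC{\varphi}$ term reduce cleanly to $\SC{\varphi_1}$ plus a multiple of $\Ku$, and it is the crux of both parts.
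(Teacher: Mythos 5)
Your proof is correct and follows essentially the same route as the paper: both reduce ``type $\alpha$'' to the vanishing criterion of Proposition~\ref{prop:TTOhardzerocond} and then exploit $\SC{\Ku}=-u(0)\Ku$ together with Equation~\eqref{eqn:SCSC}, your only deviation being the more explicit bookkeeping of the scalar parameters (which you carry out correctly, including the antilinearity of $\varphi\mapsto\SC{\varphi}$). Your part (2) routes the first equivalence through Proposition~\ref{prop:TTOhardzerocond} instead of the paper's projection computation $\varphi_1 = P_u(\overline{\psi-\varphi_2})$, but this is the same machinery, not a genuinely different argument.
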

	\begin{proof}\mbox{}
	\begin{enumerate}
	\item Let $A_{\varphi_1 + \overline{\varphi_2}}$ be of type $\alpha$. Then by Proposition~\ref{prop:CSalphaCommutant} and its corollary
there is some $\varphi\in\modsp$ and $c \in\plane$ such that $A_{\varphi_1 +
\overline{\varphi_2}} = A_{\varphi + c\Ku + \alpha\overline{\SC{\varphi}}}$, or, equivalently \[A_{\varphi_1 - \varphi -c\Ku + \overline{\varphi_2} -
\alpha\overline{\SC{\varphi}}}=0\] By Proposition~\ref{prop:TTOhardzerocond}
we have that $\varphi_1 - \varphi \in\spKu$ and that $\varphi_2 -
\overline{\alpha}\SC{\varphi}\in\spKu$. So then by Proposition~\ref{prop:CSKu} we have that $\SC{\varphi_1} -
\SC{\varphi}\in\spKu$ and so $
\overline{\alpha}\SC{\varphi_1}-\varphi_2 = \overline{\alpha}\SC{\varphi_1} -
\overline{\alpha}\SC{\varphi} - \varphi_2 + \overline{\alpha}\SC{\varphi} \in \spKu$.

		Now suppose that $\overline{\alpha}\SC{\varphi_1}-\varphi_2 \in
\spKu$. Then $\varphi_2 = \overline{\alpha}\SC{\varphi_1} + c\Ku$ for some
$c\in\plane$ and thus $A_{\varphi_1 + \overline{\varphi_2}} = A_{\varphi_1 +
\alpha\overline{\SC{\varphi_1}}+\overline{c\Ku}}$ is of type $\alpha$.
	\item $A$ is of type $\infty$ if and only if $\varphi_1 + \overline{\varphi_2} \TTOeq \overline{\psi}$ for some $\psi\in\modsp$, which is true if and only if $\varphi_1 = P_u (\overline{\psi -\varphi_2}) \TTOeq  \overline{\psi(0) - \varphi_2(0)}$ which is true if and only if $\varphi_1 \in \spKu$.

If $\varphi_1 = c\Ku$ then $\SC{\varphi_1} = -\overline{c}u(0)\Ku$ by Proposition~\ref{prop:CSKu}. On the other hand, if $\SC{\varphi_1} = c\Ku$ then \begin{align*}
\varphi_1 &= (\CS\CS^* - \Ku\otimes\Ku)\varphi_1 \\&=\SC{\SC{\varphi_1}}-\varphi_1(0)\Ku\\&=\SC{c\Ku}-\varphi_1(0)\Ku\\&=-\overline{c}u(0)\Ku - \varphi_1(0)\Ku\\&\in\spKu \qedhere
\end{align*}
	\end{enumerate}
	\end{proof}
	
\begin{prop}\label{prop:phi0}
Any TTO of type $\alpha\in\plane$ has a symbol of the form $\varphi_0 + \alpha\overline{\SC{\varphi_0}} + c\Ku$ where $\varphi_0(0) = 0$ and $c\in\plane$, and any TTO of antiholomorphic type has a symbol of the form $\overline{\varphi_0} + c\Ku$ where $\varphi_0(0)=0$.
\end{prop}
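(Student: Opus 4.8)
The plan is to start from a defining symbol and simply subtract off the part of the holomorphic component responsible for a nonzero value at the origin. Recall that $\Ku = \Ku[0]$ is the reproducing kernel at $0$, with $\Ku(0) = \ip{\Ku}{\Ku} = 1 - |u(0)|^2 > 0$ since $u$ is a nonconstant inner function. Given a type-$\alpha$ symbol $\varphi + \alpha\overline{\SC{\varphi}} + c$ with $\varphi\in\modsp$ and $c\in\plane$, I would set $d := \varphi(0)/(1-|u(0)|^2)$ and $\varphi_0 := \varphi - d\Ku$, so that $\varphi_0(0) = \varphi(0) - d\,\Ku(0) = 0$. It then remains only to check that replacing $\varphi$ by $\varphi_0$ perturbs the operator by a scalar multiple of the identity, which can be folded into the constant term.

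The key computation is that passing from $\varphi$ to $\varphi_0$ alters the symbol by $d\Ku + \alpha\overline{\SC{d\Ku}}$. Here I would use that $\SC{\cdot} = \CS C(\cdot)$ is conjugate-linear, together with the identity $\SC{\Ku} = \CS\CKu = -u(0)\Ku$ recorded in Section~2; this gives $\overline{\SC{d\Ku}} = -d\,\overline{u(0)}\,\overline{\Ku}$, so the symbol changes by $d\Ku - \alpha d\,\overline{u(0)}\,\overline{\Ku}$. Since $A_{\Ku} = I$ and $A_{\overline{\Ku}} = A_{\Ku}^* = I$, this difference represents the operator $d(1 - \alpha\overline{u(0)})I$. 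Absorbing this scalar into the constant, i.e. setting $c' := c + d(1 - \alpha\overline{u(0)})$ and recalling $A_{c'} = c'I = A_{c'\Ku}$, I conclude that $\varphi_0 + \alpha\overline{\SC{\varphi_0}} + c'\Ku$ is a symbol for the same operator, with $\varphi_0(0) = 0$, as claimed.

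For the antiholomorphic case the argument is a one-line specialization. A TTO of antiholomorphic type has a symbol $\overline{\psi}$ with $\psi\in\modsp$ (as in the proof of Proposition~\ref{prop:inBalpha}(2)), and the same splitting $\psi = \varphi_0 + d\Ku$ with $\varphi_0(0)=0$ gives $\overline{\psi} = \overline{\varphi_0} + \overline{d}\,\overline{\Ku} \TTOeq \overline{\varphi_0} + \overline{d}\,\Ku$, again because $A_{\overline{\Ku}} = A_{\Ku} = I$. Taking $c = \overline{d}$ yields the required form $\overline{\varphi_0} + c\Ku$.

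I expect the only place demanding care is the bookkeeping around the conjugate-linearity of $\SC{\cdot}$ and the sign in $\SC{\Ku} = -u(0)\Ku$; once these are tracked correctly, the fact that $\Ku$, $\overline{\Ku}$, and constants all induce scalar multiples of the identity makes every leftover kernel term collapse into a single constant, so no genuine obstacle remains.
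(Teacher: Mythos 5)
Your proof is correct and follows essentially the same route as the paper's own: you subtract the same multiple of $\Ku$ (your $d = \varphi(0)/(1-|u(0)|^2)$ is exactly the paper's $\ip{\varphi}{\Ku}/\ip{\Ku}{\Ku}$, since $\ip{\varphi}{\Ku}=\varphi(0)$), apply $\SC{\Ku} = -u(0)\Ku$ together with the conjugate-linearity of $C$, and collapse the leftover $\Ku$, $\overline{\Ku}$, and constant terms into a single constant via $A_{\Ku} = A_{\overline{\Ku}} = I$. The antiholomorphic case is handled identically in both arguments, so nothing further is needed.
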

\begin{proof} To prove the first statement, let $A$ be of type $\alpha\in\plane$ and let $\varphi + \alpha\overline{\SC{\varphi}} + c\Ku$ be a symbol of $A$, where $\varphi\in\modsp$ and $c\in\plane$. Define $\varphi_0 = \varphi - \frac{\ip{\varphi}{\Ku}}{\ip{\Ku}{\Ku}}\Ku$. Then $\varphi_0 \perp \Ku$, or in other words, $\varphi_0(0) = 0$. Then since by Proposition~\ref{prop:CSKu} $\SC{\Ku} = -u(0)\Ku$ we have that 
	\begin{align*}\varphi + \alpha\overline{\SC{\varphi}} + c\Ku \TTOeq{}& \varphi_0 + \frac{\ip{\varphi}{\Ku}}{\ip{\Ku}{\Ku}}\Ku + \alpha\overline{\SC{\varphi_0}} \\&+ \alpha\frac{\overline{\ip{\varphi}{\Ku}}}{\ip{\Ku}{\Ku}}\overline{\Ku} + c\Ku\\\TTOeq{}& \varphi_0 + \alpha\overline{\SC{\varphi_0}} + c_1\Ku\end{align*} where $c_1\in\plane$. 

To prove the second statement, consider $A = A_{\overline{\varphi}}$ and let $\varphi_0 = \varphi - \frac{\ip{\varphi}{\Ku}}{\ip{\Ku}{\Ku}}\Ku$. Then $\overline{\varphi} \TTOeq \overline{\varphi_0} +  \frac{\ip{\Ku}{\varphi}}{\ip{\Ku}{\Ku}}\Ku$.
\end{proof}
	
	Let $\alpha\in\plane\setminus\{0\}$. Then if $A=A_{\varphi_1 + \overline{\varphi_2}}$ is of type $\alpha$, its adjoint is $A^*=A_{\psi_1 + \overline{\psi_2}}$ where $\psi_1 = \varphi_2$ and $\psi_2 = \varphi_1$. By Proposition~\ref{prop:inBalpha} it follows that \[\overline{\alpha}\CS{\psi_2} - \psi_1 \in \spKu.\] It follows by Proposition~\ref{prop:CSKu} that
\begin{align*}  \CS C(\overline{\alpha}\SC{\psi_2}- \psi_1) &=  \alpha \CS \CS^* \psi_2- \SC{\psi_1}\\
&= \alpha \psi_2 - \SC{\psi_1} + \alpha\ip{\psi_2}{\Ku}\Ku\\&\in\spKu.\end{align*} The second equation follows from Equation~\ref{eqn:SCSC}. Hence we have that $\alpha^{-1}\SC{\psi_1} - \psi_2\in\spKu$ and so it follows that $A^*$ is of type $\overline{\alpha^{-1}}$. In the case that $A$ is of type 0, $A$ has a holomorphic symbol, and so its adjoint $A^*$ has an antiholomorphic symbol, and is therefore of type $\infty$. Thus we can state the following duality relationship.

\begin{prop}
	An operator in $\TTOs$ is of type $\alpha\in\cplane$ if and only if its adjoint is of type $\overline{\alpha^{-1}}$ using the convention that $0^{-1} = \infty$ and $\infty^{-1} = 0$.
\end{prop}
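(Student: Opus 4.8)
The plan is to reduce the biconditional to a single implication by exploiting two involutions, and then to verify that implication by pushing the ``type'' condition of Proposition~\ref{prop:inBalpha} through the conjugate shift. First I would check that the map $\alpha \mapsto \overline{\alpha^{-1}}$ is an involution of $\cplane$ under the stated conventions: since $(\overline{w})^{-1} = \overline{w^{-1}}$ for every $w \in \cplane$, applying the map twice sends $\alpha$ to $\overline{(\overline{\alpha^{-1}})^{-1}} = \overline{\overline{\alpha}} = \alpha$, and the conventions $0^{-1} = \infty$, $\infty^{-1} = 0$ make this consistent at the two exceptional points. Because $A \mapsto A^*$ is likewise an involution, it then suffices to prove only the forward direction, namely that $A$ of type $\alpha$ forces $A^*$ to be of type $\overline{\alpha^{-1}}$; the converse follows by applying this to $A^*$ in place of $A$.

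For the generic case $\alpha \in \plane \setminus \{0\}$ I would argue as follows. Writing $A = A_{\varphi_1 + \overline{\varphi_2}}$ with $\varphi_i \in \modsp$, the identity $A_\Phi^* = A_{\overline{\Phi}}$ from the introduction gives $A^* = A_{\psi_1 + \overline{\psi_2}}$ with $\psi_1 = \varphi_2$ and $\psi_2 = \varphi_1$. By Proposition~\ref{prop:inBalpha}(1), the hypothesis that $A$ is of type $\alpha$ says exactly $\overline{\alpha}\SC{\psi_2} - \psi_1 \in \spKu$, while the desired conclusion that $A^*$ is of type $\overline{\alpha^{-1}}$ says exactly $\alpha^{-1}\SC{\psi_1} - \psi_2 \in \spKu$. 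To pass from the first to the second I would apply the conjugate shift $\SC{(\cdot)}$ to the hypothesis: expanding $\SC{\,\overline{\alpha}\SC{\psi_2} - \psi_1\,}$ and using Equation~\eqref{eqn:SCSC} to evaluate $\SC{\SC{\psi_2}} = \psi_2 - \psi_2(0)\Ku$ collapses the expression to $\alpha\psi_2 - \SC{\psi_1}$ modulo a scalar multiple of $\Ku$. Since $\SC{(\cdot)}$ carries $\spKu$ into $\spKu$ (by $\SC{\Ku} = -u(0)\Ku$ from Proposition~\ref{prop:CSKu}), the image remains in $\spKu$, and after rescaling by $-\alpha^{-1}$ this yields $\alpha^{-1}\SC{\psi_1} - \psi_2 \in \spKu$, as required.

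The endpoints need no containment computation and follow directly from the definition of type together with $A_\Phi^* = A_{\overline{\Phi}}$. If $\alpha = 0$, then $A$ has a holomorphic symbol, so $A^*$ has an antiholomorphic symbol and is of type $\infty = \overline{0^{-1}}$; the case $\alpha = \infty$ is the same with holomorphic and antiholomorphic interchanged, giving type $0 = \overline{\infty^{-1}}$ for the adjoint. Combining these with the generic case and the involution reduction of the first paragraph completes the biconditional.

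I expect the main obstacle to be the bookkeeping in the middle step rather than any conceptual difficulty: one must track carefully how the one-dimensional space $\spKu$ --- which the definition of type lets us discard --- is preserved when $\SC{(\cdot)}$ is applied, and confirm that the stray multiples of $\Ku$ produced by Equation~\eqref{eqn:SCSC} and by $\SC{\Ku}$ really do land in $\spKu$ rather than obstructing membership. Equally delicate is the conjugation of the scalar, $\overline{\alpha} \mapsto \alpha^{-1}$, which is the precise point at which the $\overline{\alpha^{-1}}$ of the statement is generated and where an error would produce the wrong dual type.
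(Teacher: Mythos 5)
Your proposal is correct and follows essentially the same route as the paper: the paper likewise reads off the type condition $\overline{\alpha}\SC{\psi_2}-\psi_1\in\spKu$ from Proposition~\ref{prop:inBalpha}, applies the antilinear map $f\mapsto\SC{f}$, uses Equation~\eqref{eqn:SCSC} (together with $\SC{\Ku}=-u(0)\Ku$) to collapse $\SC{\SC{\psi_2}}$ modulo $\spKu$, rescales to obtain $\alpha^{-1}\SC{\psi_1}-\psi_2\in\spKu$, and handles $\alpha=0$ (and by symmetry $\alpha=\infty$) via the holomorphic/antiholomorphic symbol observation. Your explicit involution reduction for the converse is only a slightly more formal packaging of what the paper leaves implicit.
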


The operator $cI = A_{c\Ku} = A_{c\overline{\Ku}}$ is, by the above definition, of type $\alpha$ for every $\alpha\in\cplane$. This is the only way that an operator in $\TTOs$ can be of more than one type. Specifically, this means that any $A\in\TTOs$ is either of no type, one type, or every type.

	\begin{prop}
		Let $A\in\TTOs$ be of type $\alpha$ and of type $\beta$, where $\alpha\neq\beta$. Then $A = cI$ for some $c\in\plane$.
	\end{prop}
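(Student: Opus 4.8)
The plan is to fix a decomposition $A = A_{\varphi_1 + \overline{\varphi_2}}$ with $\varphi_1,\varphi_2\in\modsp$ (any operator in \TTOs\ admits such a symbol by Proposition~\ref{prop:TTOchar}), apply the membership criteria of Proposition~\ref{prop:inBalpha} to both hypotheses ``$A$ is of type $\alpha$'' and ``$A$ is of type $\beta$'', and deduce that \emph{both} $\varphi_1$ and $\varphi_2$ lie in the one-dimensional subspace $\spKu$. Since $\spKu$ is a complex line, it is closed under subtraction and scalar multiplication, and that is the only structural fact I will need from it.

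First I would treat the case $\alpha,\beta\in\plane$. Proposition~\ref{prop:inBalpha}(1) gives $\overline{\alpha}\SC{\varphi_1}-\varphi_2\in\spKu$ and $\overline{\beta}\SC{\varphi_1}-\varphi_2\in\spKu$. Subtracting these two memberships cancels $\varphi_2$ and leaves $(\overline{\alpha}-\overline{\beta})\SC{\varphi_1}\in\spKu$; because $\alpha\neq\beta$ the scalar $\overline{\alpha}-\overline{\beta}$ is nonzero, so $\SC{\varphi_1}\in\spKu$. The nontrivial equivalence $\SC{\varphi_1}\in\spKu \iff \varphi_1\in\spKu$ from Proposition~\ref{prop:inBalpha}(2) then yields $\varphi_1\in\spKu$, and feeding $\SC{\varphi_1}\in\spKu$ back into the type-$\alpha$ condition gives $\varphi_2\in\spKu$ as well.

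The remaining case is when one of the two types is $\infty$ (since $\alpha\neq\beta$ at most one can be), say $\beta=\infty$ and $\alpha\in\plane$. Here Proposition~\ref{prop:inBalpha}(2) directly gives $\varphi_1\in\spKu$, hence $\SC{\varphi_1}\in\spKu$, and combining this with the type-$\alpha$ condition $\overline{\alpha}\SC{\varphi_1}-\varphi_2\in\spKu$ again forces $\varphi_2\in\spKu$. Thus in every case $\varphi_1=a\Ku$ and $\varphi_2=b\Ku$ for some $a,b\in\plane$. Using the linearity of $\Phi\mapsto A_\Phi$, the relation $A_{\overline{\Phi}}=A_\Phi^*$, and $A_{\Ku}=I$, I then compute $A=A_{a\Ku}+A_{\overline{b\Ku}}=aI+\overline{b}\,I=cI$ with $c=a+\overline{b}$, completing the proof.

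I do not anticipate a serious obstacle: the argument is a short piece of linear bookkeeping on the membership conditions handed to us by Proposition~\ref{prop:inBalpha}. The only points requiring genuine care are to handle the type-$\infty$ hypothesis through its own characterization rather than formula (1), and to invoke the equivalence $\varphi_1\in\spKu \iff \SC{\varphi_1}\in\spKu$ so as to translate between $\varphi_1$ and the transformed vector $\SC{\varphi_1}$; the conceptual heart is simply that subtracting the two type conditions eliminates $\varphi_2$ and isolates $\SC{\varphi_1}$.
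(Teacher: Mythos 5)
Your proof is correct, and it takes a genuinely different (and shorter) route than the paper's. You fix one decomposition $A=A_{\varphi_1+\overline{\varphi_2}}$ with $\varphi_i\in\modsp$ (available by Proposition~\ref{prop:TTOchar}) and translate both type hypotheses through the membership criteria of Proposition~\ref{prop:inBalpha}, so everything collapses to linear algebra in the line $\spKu$: subtracting $\overline{\beta}\SC{\varphi_1}-\varphi_2\in\spKu$ from $\overline{\alpha}\SC{\varphi_1}-\varphi_2\in\spKu$ isolates $(\overline{\alpha}-\overline{\beta})\SC{\varphi_1}$, and the equivalence $\SC{\varphi_1}\in\spKu$ iff $\varphi_1\in\spKu$ --- which the paper proves in part (2) of that proposition for arbitrary $\varphi_1\in\modsp$, via Equation~(\ref{eqn:SCSC}) --- handles both coordinates, with the case $\beta=\infty$ absorbed by the same criterion. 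The paper argues differently: it treats $\{\alpha,\beta\}=\{0,\infty\}$ separately by comparing $A-\CS A\CS^*$ for a holomorphic and an antiholomorphic symbol, and in the remaining case first reduces via adjoint duality to $\alpha,\beta\in\plane$, normalizes two type-adapted symbols $\varphi+\alpha\overline{\SC{\varphi}}+c$ and $\psi+\beta\overline{\SC{\psi}}+d$ with $\varphi(0)=\psi(0)=0$ using Proposition~\ref{prop:phi0}, and compares the resulting rank-two identities, extracting $\varphi=\psi$, then $c=d$ and $\SC{\varphi}=0$ from the orthogonality $\ip{\SC{\varphi}}{\Ku}=0$. Both arguments ultimately rest on Propositions~\ref{prop:TTOchar} and~\ref{prop:TTOhardzerocond}, but you invoke them prepackaged inside Proposition~\ref{prop:inBalpha}; that buys uniformity (all finite $\alpha$, including $0$, are handled at once, with no adjoint reduction and no symbol normalization) and brevity, while the paper's hands-on version exhibits the normalized symbols and the orthogonality relation explicitly. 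Your concluding step is also sound: once $\varphi_1=a\Ku$ and $\varphi_2=b\Ku$, linearity of $\Phi\mapsto A_\Phi$ together with $A_{\Ku}=A_{\overline{\Ku}}=I$ (both $\Ku$ and $\overline{\Ku}$ are $\TTOeq 1$ by Proposition~\ref{prop:TTOzerocond}) gives $A=(a+\overline{b})I$.
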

	\begin{proof}
		If $\alpha =0$ and $\beta = \infty$, then there are $\varphi, \psi\in\modsp$ such that $A = A_\varphi = A_{\overline{\psi}}$ and so $A_\varphi - \CS A_\varphi \CS^* = \varphi\otimes\Ku$ and $A_{\overline{\psi}} - \CS A_{\overline{\psi}} \CS^* = \Ku \otimes\psi$ by Proposition~\ref{prop:TTOchar}. Thus $\varphi\otimes\Ku = \Ku\otimes\psi$ and $\varphi = c\Ku$ for some $c\in\plane$, and so $A = cI$.
		
		Now suppose that at least one of $\alpha$ and $\beta$ is in $\plane\setminus\{0\}$. By looking at $A^*$ if needed we can assume without loss of generality that neither $\alpha$ or $\beta$ is $\infty$. By Proposition~\ref{prop:phi0} there are $\varphi,\psi\in\modsp$ and $c,d\in\plane$ such that $\varphi(0) = \psi(0) = 0$  and both $\varphi + \alpha\overline{\SC{\varphi}} + c$ and $\psi + \beta\overline{\SC{\psi}} + d$ are symbols for $A$. It follows that \begin{align*} A - \CS A \CS^*&= \varphi\otimes\Ku + c\Ku\otimes\Ku + \alpha\Ku\otimes\SC{\varphi} \\&= \psi\otimes\Ku + d\Ku\otimes\Ku + \beta\Ku\otimes\SC{\psi}.\end{align*} By rearranging terms we see that $\varphi - \psi \in \spKu$. Since $\varphi, \psi \perp \Ku$ it follows that $\varphi = \psi$ and \[ (c-d) \Ku \otimes\Ku = (\beta - \alpha)\Ku\otimes\SC{\varphi}.\] Therefore $\SC{\varphi} = \frac{c-d}{\beta-\alpha}\Ku$ but since \[\ip{\SC{\varphi}}{\Ku} = \ip{\CKu}{\CS^* \varphi} = \ip{\SC{\Ku}}{\varphi} = \ip{-u(0)\Ku}{\varphi} = 0\] we get that $c = d$ and $\SC{\varphi} = 0$. 
		
		  Finally we calculate $\varphi = (I - \Ku\otimes\Ku)\varphi = \SC{\SC{\varphi}} = 0$ and get that $A = A_c = c_I$.  
	\end{proof}
	
 For the rest of this section fix $\alpha\in\cdisc$. By Proposition~\ref{prop:CSalphaCommutant} if an operator $A\in\TTOs$ is in $\cmt{\CSalpha}$ then it is of type $\alpha$. We spend the remainder of this section proving that every TTO of type $\alpha$ is in $\cmt{\CSalpha}$. Specifically, we will show that the product of two TTOs of type $\alpha$ is itself in $\TTOs$. Therefore any two TTOs of type $\alpha$ commute and so any TTO of type $\alpha$ commutes with $\CSalpha$. Therefore for $\alpha\in\cdisc$, $\cmt{\CSalpha}$ is precisely the TTOs of type $\alpha$, and therefore $\cmt{{\CSalpha}^*}$ is precisely the TTOs of type $\overline{\alpha}^{-1}$ with the convention that $\frac{1}{0} = \infty$.

First a lemma that will prove useful here and later.
	\begin{lem}\label{lem:TTOhardprodcondition}Let $\Phi = \varphi_1+\overline{\varphi_2}$ and $\Psi =
\psi_1+\overline{\psi_2}$ where $\varphi_i, \psi_i \in \modsp$ such that $A_\Phi,A_\Psi\in\TTOs$. Then $A_\Phi
A_\Psi$ is in $\TTOs$ if and only if
		\[\varphi_1\otimes \psi_2 -
(\SC{\varphi_2})\otimes(\SC{\psi_1}) = \Phi_0\otimes \Ku  + \Ku \otimes\Psi_0\]
		 for some $\Phi_0, \Psi_0 \in \modsp$.\end{lem}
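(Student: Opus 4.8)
The plan is to test $A_\Phi A_\Psi$ against Proposition~\ref{prop:TTOchar}. Since $A_\Phi$ and $A_\Psi$ are bounded, so is their product, and by that proposition $A_\Phi A_\Psi \in \TTOs$ precisely when $A_\Phi A_\Psi - \CS A_\Phi A_\Psi\CS^*$ lies in the space $\mathcal{N}$ of operators of the form $f\otimes\Ku + \Ku\otimes g$ with $f,g\in\modsp$. I will therefore compute $A_\Phi A_\Psi - \CS A_\Phi A_\Psi\CS^*$ \emph{modulo} $\mathcal{N}$ and show the residue is exactly $\varphi_1\otimes\psi_2 - (\SC{\varphi_2})\otimes(\SC{\psi_1})$; the lemma then follows in one stroke, since both implications reduce to the statement that this residue belongs to $\mathcal{N}$.

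The key manoeuvre is to break up the middle term $\CS A_\Phi A_\Psi\CS^*$ by inserting the resolution $I = \CS^*\CS + \CKu\otimes\CKu$ from Equation~\eqref{eqn:I-S*S} between $A_\Phi$ and $A_\Psi$. Using $B(f\otimes g)D = (Bf)\otimes(D^*g)$ this gives
\[ \CS A_\Phi A_\Psi\CS^* = (\CS A_\Phi\CS^*)(\CS A_\Psi\CS^*) + (\CS A_\Phi\CKu)\otimes(\CS A_\Psi^*\CKu). \]
Proposition~\ref{prop:TTOchar} identifies $\CS A_\Phi\CS^* = A_\Phi - R_\Phi$ with $R_\Phi := \varphi_1\otimes\Ku + \Ku\otimes\varphi_2$, and likewise for $A_\Psi$. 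Expanding $(A_\Phi - R_\Phi)(A_\Psi - R_\Psi)$, the term $A_\Phi A_\Psi$ cancels, leaving $A_\Phi R_\Psi + R_\Phi A_\Psi - R_\Phi R_\Psi - (\CS A_\Phi\CKu)\otimes(\CS A_\Psi^*\CKu)$, a manageable sum of rank-one operators.

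What remains is to evaluate the four boundary vectors $A_\Phi\Ku$, $A_\Psi^*\Ku$, $\CS A_\Phi\CKu$, $\CS A_\Psi^*\CKu$ and to collect terms. For $A_\Phi\Ku$ I would compute directly from $A_\Phi\Ku = P_u(\Phi\Ku)$ using $\Ku = 1 - \overline{u(0)}u$, the identity $P_u(uh) = 0$ for $h\in H^2$, and $P_u(u\overline{\varphi}) = \SC{\varphi}$ (immediate from $\SC{\varphi} = \CS\widetilde{\varphi}$ and $\CS = P_u M_z$), obtaining $A_\Phi\Ku \equiv \varphi_1 - \overline{u(0)}\SC{\varphi_2}$ modulo $\spKu$; applying the same formula to $A_\Psi^* = A_{\overline{\Psi}}$ yields $A_\Psi^*\Ku \equiv \psi_2 - \overline{u(0)}\SC{\psi_1}$. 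For the two vectors built from $\CKu$ I would instead use $C$-symmetry, writing $A_\Phi\CKu = C A_\Phi^*\Ku$ and then pushing $\CS$ through with $\CS\widetilde{\varphi} = \SC{\varphi}$, $C\CS C = \CS^*$, $\CS\CS^* = I - \Ku\otimes\Ku$ (Equation~\eqref{eqn:I-SS*}) and $\CS\CKu = -u(0)\Ku$; remarkably the $\Ku$-terms cancel exactly, giving the clean formulas $\CS A_\Phi\CKu = \SC{\varphi_2} - u(0)\varphi_1$ and $\CS A_\Psi^*\CKu = \SC{\psi_1} - u(0)\psi_2$. Substituting everything into the sum and collecting --- here one must respect the conjugate-linearity of $\otimes$ in its right slot and use $\|\Ku\|^2 = 1 - |u(0)|^2$ --- the coefficients of $(\SC{\varphi_2})\otimes\psi_2$ and $\varphi_1\otimes(\SC{\psi_1})$ cancel, the coefficient of $\varphi_1\otimes\psi_2$ collapses to $1$, and only $\varphi_1\otimes\psi_2 - (\SC{\varphi_2})\otimes(\SC{\psi_1})$ survives modulo $\mathcal{N}$, as desired. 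I expect this final bookkeeping --- tracking the scalars $u(0)$, $\overline{u(0)}$, $|u(0)|^2$ through conjugate-linear slots so that exactly the right cancellations occur --- to be the main obstacle; the insertion trick itself is short, but it is easy to mismanage the many $\Ku$-valued correction terms.
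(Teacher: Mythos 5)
Your proposal is correct and follows essentially the same route as the paper's proof: both insert $I = \CS^*\CS + \CKu\otimes\CKu$ into $\CS A_\Phi A_\Psi\CS^*$, replace $\CS A_\Phi\CS^*$ and $\CS A_\Psi\CS^*$ via Proposition~\ref{prop:TTOchar}, evaluate the same four boundary vectors (arriving at the same formulas $A_\Phi\Ku \equiv \varphi_1 - \overline{u(0)}\SC{\varphi_2}$ modulo $\spKu$ and $\CS A_\Phi\CKu = \SC{\varphi_2} - u(0)\varphi_1$), and collect terms modulo the rank-two forms $f\otimes\Ku + \Ku\otimes g$. The only cosmetic difference is that you obtain $A_\Phi\CKu$ via $C$-symmetry ($A_\Phi\CKu = CA_\Phi^*\Ku$) where the paper computes $P_u\left[\Phi\,\overline{z}\left(u - u(0)\right)\right]$ directly --- both yield the identical intermediate expression, and your final cancellation bookkeeping matches the paper's.
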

	\begin{proof}
		In what follows, $\Phi_0$ and $\Psi_0$ represent functions in
\modsp\ that can be different
		from use to use.
		By Proposition~\ref{prop:TTOchar}, $A_\Phi A_\Psi\in\TTOs$ if and only
if $A_\Phi A_\Psi - \CS 
		A_\Phi A_\Psi \CS^* = \Phi_0\otimes\Ku + 
		\Ku\otimes\Psi_0$. It suffices to show that $A_\Phi A_\Psi -
\CS 
		A_\Phi A_\Psi \CS^* = \varphi_1\otimes \psi_2 - 
		(\SC{\varphi_2})\otimes(\SC{\psi_1}) + 
		\Phi_0\otimes \Ku  + \Ku \otimes\Psi_0$.
		Recall Equation~\ref{eqn:I-S*S}, which states that $I = \CS^*\CS + 
		\CKu\otimes\CKu$. Therefore
		\begin{align}
			 \CS A_\Phi A_\Psi \CS^* ={}& \CS A_\Phi (\CS^* \CS + \CKu\otimes\CKu) A_\Psi \CS^*\nonumber\\
						={}& \CS A_\Phi \CS^* \CS A_\Psi \CS^* + \left(\CS A_\Phi \CKu\right)\otimes
			\left(\CS A_{\overline{\Psi}} \CKu\right).~\label{hugemess1}
		\end{align}
	Since $A_\Phi \CKu = P_u\left[\left(\varphi_1 + \overline{\varphi_2}\right)\left(\overline{z}\left(u - u(0)\right)\right)\right]$ we have
	\begin{align*}
		\CS A_\Phi \CKu
		&= 	\CS \left(\widetilde{\varphi_2} + \varphi_1(0)\CKu
			-u(0)\CS^*\varphi_1\right)\\
		&=  \SC{\varphi_2} - u(0) \varphi_1(0)\Ku
			-u(0)\CS\CS^*\varphi_1\\
		&=	\SC{\varphi_2} - u(0) \varphi_1(0)\Ku
			-u(0)\varphi_1 +
u(0)\left(\Ku\otimes\Ku\right)\varphi_1\\
		&=	\SC{\varphi_2} - u(0) \varphi_1(0)\Ku
			-u(0)\varphi_1 + u(0)\varphi_1(0)\Ku\\
		&=  \SC{\varphi_2}-u(0)\varphi_1
	\end{align*}
	
	so the second term of (\ref{hugemess1}) is
	\begin{align*}
		\left(\CS A_\Phi \CKu\right)\otimes
		\left(\CS A_{\overline{\Psi}} \CKu\right)
	={}&\left(\SC{\varphi_2}-u(0)\varphi_1\right)\otimes
		\left(\SC{\psi_1}-u(0)\psi_2\right)\\
	={}&\SC{\varphi_2}\otimes\SC{\psi_1} -
		u(0)\left[\varphi_1\otimes\SC{\psi_1}\right] \\
	&-	\overline{u(0)}\left[\SC{\varphi_2}\otimes\psi_2\right] +
		|u(0)|^2\left[\varphi_1\otimes\psi_2\right].
	\end{align*}
	By Proposition~\ref{prop:TTOchar} we have
	that $\CS A_\Phi \CS^* = A_\Phi - \varphi_1\otimes\Ku -
\Ku\otimes\varphi_2$, and so the first term of (\ref{hugemess1}) is

		\begin{align*}
			\CS A_\Phi \CS^* \CS A_\Psi \CS^* ={}& \left(A_\Phi - \varphi_1\otimes\Ku -
\Ku\otimes\varphi_2\right)
			\left(A_\Psi - \psi_1\otimes\Ku -
\Ku\otimes\psi_2\right)\\
		={}& A_\Phi A_\Psi - \Phi_0\otimes\Ku -
\left(A_\Phi\Ku\right)\otimes\psi_2\\
	&- \varphi_1\otimes\left(A_{\overline{\Psi}}\Ku\right) + \left(1 - |u(0)|^2\right)
\varphi_1\otimes\psi_2
	- \Ku\otimes\Psi_0 \\
	={}& A_\Phi A_\Psi + \Phi_0\otimes\Ku
- \Ku\otimes\Psi_0-\left(1 + |u(0)|^2\right) \varphi_1\otimes\psi_2\\
	&+ \overline{u(0)}\left(\SC{\varphi_2}\otimes\psi_2\right)
		+u(0)\left(\varphi_1\otimes\SC{\psi_1}\right).
		\end{align*}

	By combining the expanded terms together, we get 
	\[
			\CS A_\Phi A_\Psi \CS^* 
			= \SC{\varphi_2}\otimes\SC{\psi_1}\ - \varphi_1\otimes\psi_2
			+\Phi_0\otimes\Ku + \Ku\otimes\Psi_0 + A_\Phi A_\Psi
		\]
		and the result follows.
	\end{proof}

\begin{thm}~\label{thm:CSalphaCommutant}
	Let $\alpha\in\cdisc$, and let $A$ be a bounded operator on $\modsp$. Then $A$ is a TTO of type $\alpha$ if and only if $A$ is in $\cmt{\CSalpha}$.
\end{thm}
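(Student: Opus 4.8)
The plan is to prove the two inclusions separately, noting that one of them is essentially already in hand. The inclusion $\cmt{\CSalpha}\subseteq\{\text{TTOs of type }\alpha\}$ is immediate from Proposition~\ref{prop:CSalphaCommutant}: any bounded $A$ commuting with $\CSalpha$ lies in $\TTOs$ with symbol $\varphi+\alpha\overline{\SC{\varphi}}$, which is of type $\alpha$. So the real content is the converse, that every TTO of type $\alpha$ commutes with $\CSalpha$. My route to this is the one advertised in the text: first establish that the product of any two TTOs of type $\alpha$ is again in $\TTOs$; then deduce commutativity from $C$-symmetry; then observe that $\CSalpha$ is itself of type $\alpha$, so that every TTO of type $\alpha$ commutes with it.

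For the product step I would invoke Lemma~\ref{lem:TTOhardprodcondition} together with the normalized symbols supplied by Proposition~\ref{prop:phi0}. Concretely, write the two type-$\alpha$ operators as $A_\Phi$ and $A_\Psi$ with $\Phi=\varphi+\alpha\overline{\SC{\varphi}}+c\Ku$ and $\Psi=\psi+\alpha\overline{\SC{\psi}}+d\Ku$, where $\varphi(0)=\psi(0)=0$, so that in the notation of the lemma $\varphi_1=\varphi+c\Ku$, $\varphi_2=\overline{\alpha}\SC{\varphi}$, $\psi_1=\psi+d\Ku$, and $\psi_2=\overline{\alpha}\SC{\psi}$. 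The whole calculation then rests on two identities already available: $\SC{\SC{\varphi}}=\varphi-\varphi(0)\Ku=\varphi$ (from Equation~\ref{eqn:SCSC}, using $\varphi(0)=0$) and $\SC{\Ku}=-u(0)\Ku$ (from Proposition~\ref{prop:CSKu}). These give $\SC{\varphi_2}=\alpha\varphi$ and $\SC{\psi_1}=\SC{\psi}-\overline{d}u(0)\Ku$, and substituting into the expression $\varphi_1\otimes\psi_2-(\SC{\varphi_2})\otimes(\SC{\psi_1})$ of the lemma reduces it, after keeping track of the conjugate-linearity of $\otimes$ in its second slot, to $\alpha d\overline{u(0)}\,(\varphi\otimes\Ku)+\alpha c\,(\Ku\otimes\SC{\psi})$.

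The point I would emphasize --- and the only place where anything is really at stake --- is that this last expression is exactly of the form $\Phi_0\otimes\Ku+\Ku\otimes\Psi_0$ required by Lemma~\ref{lem:TTOhardprodcondition}, with $\Phi_0=\alpha d\overline{u(0)}\varphi$ and $\Psi_0=\overline{\alpha c}\,\SC{\psi}$ both in $\modsp$. The reason this works is a cancellation: the genuinely obstructive term $\alpha(\varphi\otimes\SC{\psi})$, which is \emph{not} of the boundary form, appears with identical coefficient $\alpha$ in both $\varphi_1\otimes\psi_2$ and $(\SC{\varphi_2})\otimes(\SC{\psi_1})$ and therefore drops out of the difference. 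This is precisely where it matters that both operators carry the \emph{same} type $\alpha$; with two distinct types the cross term would survive and the product would generally leave $\TTOs$. I expect this cancellation to be the crux, with the surrounding manipulations being routine bookkeeping about rank-one tensors and the action of $C$.

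Finally, with the product in $\TTOs$ in hand, I would close the argument as follows. Every operator in $\TTOs$ is $C$-symmetric, so $A_\Phi$, $A_\Psi$, and their product are all $C$-symmetric; by Proposition~\ref{prop:CSOcommute} a product of $C$-symmetric operators is $C$-symmetric if and only if the factors commute, whence $A_\Phi$ and $A_\Psi$ commute. Since $\CSalpha$ commutes with itself it is, by Proposition~\ref{prop:CSalphaCommutant}, a bounded TTO of type $\alpha$; hence every TTO of type $\alpha$ commutes with $\CSalpha$, i.e. lies in $\cmt{\CSalpha}$, giving the converse inclusion and completing the theorem.
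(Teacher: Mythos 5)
Your proof is correct, and it follows the skeleton the paper advertises --- Lemma~\ref{lem:TTOhardprodcondition} for membership of the product in $\TTOs$, then Proposition~\ref{prop:CSOcommute} to convert $C$-symmetry of the product into commutativity --- but it executes that skeleton in greater generality than the paper's actual proof, and the difference is worth noting. The paper proves only the special case needed: it first computes an explicit symbol for $\CSalpha$ (the expression $(1-\alpha\overline{u(0)})^{-1}(\CS\Ku + \alpha\overline{\SC{\CS\Ku}} + \alpha\overline{u'(0)}\Ku)$, derived by applying Proposition~\ref{prop:CSalphaCommutant} to $\CSalpha$ itself and simplifying via Proposition~\ref{prop:CSKu}), and then applies Lemma~\ref{lem:TTOhardprodcondition} to the single product $A\CSalpha$, where the left-hand side collapses to $\left((I-\CS\CS^*)\varphi\right)\otimes\SC{\CS\Ku} = \varphi(0)\Ku\otimes\SC{\CS\Ku}$. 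You instead prove directly that the product of \emph{any} two type-$\alpha$ operators lies in $\TTOs$, using the normalized symbols of Proposition~\ref{prop:phi0}; your computation is right (indeed $\SC{\varphi_2}=\alpha\SC{\SC{\varphi}}=\alpha\varphi$ by Equation~\ref{eqn:SCSC} and $\varphi(0)=0$, $\SC{\psi_1}=\SC{\psi}-\overline{d}u(0)\Ku$ by Proposition~\ref{prop:CSKu}, and the cross term $\alpha\,\varphi\otimes\SC{\psi}$ cancels exactly because both operators carry the same $\alpha$), and your identification of that cancellation as the point where equality of types enters is exactly the right diagnosis --- it foreshadows Lemma~\ref{lem:TTOBalphaprod}. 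What your route buys is that it bypasses the explicit symbol computation for $\CSalpha$ entirely (you only need that $\CSalpha$ is of type $\alpha$, which, as in the paper, follows from Proposition~\ref{prop:CSalphaCommutant} since $\CSalpha$ commutes with itself), and it establishes the closure of $\Balpha$ under multiplication as the main step rather than recovering it afterwards from the fact that a commutant is an algebra; the cost is a slightly longer tensor bookkeeping computation with two general symbols instead of one general and one concrete.
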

\begin{proof}
Proposition~\ref{prop:CSalphaCommutant} proves that everything in the $\cmt{\CSalpha}$ is of type $\alpha$, so assume $A$ is of type $\alpha$. We will prove that $A\CSalpha$ is in $\TTOs$, and hence $C$-symmetic, and so $A\CSalpha = \CSalpha A$ by Proposition~\ref{prop:CSOcommute}.

$\CSalpha$ commutes with itself, and therefore is of type $\alpha$. By Definition~\ref{defn:CS} \[\CSalpha\Ku = \CS\Ku + \frac{\alpha\overline{u'(0)}}{1-\alpha\overline{u(0)}}\Ku.\] So by Proposition~\ref{prop:CSalphaCommutant} \[(1-\alpha\overline{u(0)})^{-1}(\CS\Ku + \alpha\overline{\SC{\CS\Ku}} + \frac{\alpha\overline{u'(0)}}{1-\alpha\overline{u(0)}}(\Ku + \alpha\overline{\SC{\Ku}}))\] is a symbol for \CSalpha. By Proposition~\ref{prop:CSKu} \[\Ku + \alpha\overline{\SC{\Ku}} \TTOeq (1-\alpha\overline{u(0)})\] and so it follows that \begin{equation}(1-\alpha\overline{u(0)})^{-1}(\CS\Ku + \alpha\overline{\SC{\CS\Ku}} + \alpha\overline{u'(0)}\Ku)\label{eqn:CSalphaSymbol}\end{equation} is also a symbol for $\CSalpha$. 

Suppose $A$ is of type $\alpha$. Then we may without loss of generality assume that $\varphi + \alpha\overline{\SC{\varphi}}$ is a symbol for $A$ where $\varphi$ is in $\modsp$. Applying Lemma~\ref{lem:TTOhardprodcondition} we see that $A \CSalpha$ is in \TTOs\ if and only if there exist $\Phi, \Psi\in\modsp$ such that \[\varphi\otimes \left(\overline{\alpha}\SC{\CS\Ku}\right) - \left(\SC{\overline{\alpha}\SC{\varphi}}\right)\otimes \SC{\CS\Ku} = \Phi \otimes\Ku + \Ku\otimes\Psi\] Factoring $\alpha$ out of the left-hand side, we get
\begin{align*}
	\varphi\otimes \left(\SC{\CS\Ku}\right) - \left(\SC{\SC{\varphi}}\right)\otimes \SC{\CS\Ku} &= \left(\left(I - \CS\CS^*\right)\varphi\right) \otimes \SC{\CS\Ku}
	\\&= \varphi(0)\Ku\otimes\SC{\CS\Ku}
\end{align*} The conclusion follows.
\end{proof}

\section{Algebras of TTOs}\label{section:algebras}
The results of the previous section show that the TTOs of type $\alpha$ form a weakly closed commutative algebra for any $\alpha\in\cplane$, which we denote $\Balpha$. In this section we will show that these algebras are maximal --- any algebra in $\TTOs$ is a subalgebra of at least one $\Balpha$. 

We begin by showing that if $A_\Phi$ is of type $\alpha$, $A_\Psi\in\TTOs$, and their product is in $\TTOs$, then either $A_\Phi$ is a multiple of $I$, or $A_\Psi$ is of type $\alpha$ as well.

\begin{lem}
	Let $A_\Phi,A_\Phi\in\TTOs$ such that $A_\Phi A_\Psi\in\TTOs$ and let $\alpha\in\plane^*$. If one of the
operators in the 
	product is of type $\alpha$, then either it is a constant multiple of the
identity operator, or the other
	is of type $\alpha$ as well.\label{lem:TTOBalphaprod}
\end{lem}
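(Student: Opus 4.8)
The plan is to funnel every case into a single rank-one identity and then exploit the fact that the fixed reproducing kernel $\Ku$ does not vanish at the origin. First I would remove the asymmetry between the two factors by passing to adjoints. Because $(A_\Phi A_\Psi)^* = A_\Psi^* A_\Phi^*$ is again in $\TTOs$, and because a TTO is of type $\alpha$ exactly when its adjoint is of type $\overline{\alpha^{-1}}$, the situation in which the \emph{second} factor is of type $\alpha$ is precisely the situation in which the \emph{first} factor of the adjoint product $A_\Psi^*A_\Phi^*$ is of type $\overline{\alpha^{-1}}$. Since $\overline{\alpha^{-1}}$ sweeps out all of $\cplane$ as $\alpha$ does, it suffices to prove the one-sided statement: if $A_\Phi$ is of type $\beta\in\cplane$ and $A_\Phi A_\Psi\in\TTOs$, then $A_\Phi$ is a scalar multiple of $I$ or $A_\Psi$ is of type $\beta$.

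The heart of the matter is a sublemma I would establish first: if $a\otimes b = \Phi_0\otimes\Ku + \Ku\otimes\Psi_0$ for some $\Phi_0,\Psi_0\in\modsp$ and $a\notin\spKu$, then $b\in\spKu$. Evaluating both sides on an arbitrary $h\in\modsp$ with $h\perp\Ku$ kills the $\Phi_0\otimes\Ku$ term and leaves $a\,\ip{h}{b} = \Ku\,\ip{h}{\Psi_0}$; since $a\notin\spKu$ makes $a$ and $\Ku$ linearly independent, this forces $\ip{h}{b}=0$ for every such $h$, that is, $b\perp(\modsp\ominus\spKu)$, so $b\in\spKu$.

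To reach that identity I would split on $\beta$. For finite $\beta$, Proposition~\ref{prop:phi0} supplies a symbol $\varphi + \beta\overline{\SC{\varphi}} + c\Ku$ of $A_\Phi$ with $\varphi(0)=0$; here $\varphi_1 = \varphi + c\Ku$, and using Equation~\ref{eqn:SCSC} together with $\varphi(0)=0$ one gets the clean identity $\SC{\varphi_2} = \beta\varphi$. Substituting into Lemma~\ref{lem:TTOhardprodcondition} and carefully tracking the conjugate-linear second slot of $\otimes$, the membership $A_\Phi A_\Psi\in\TTOs$ collapses, after absorbing every $\Ku$-term on the right, to $\varphi\otimes g = \Phi_0\otimes\Ku + \Ku\otimes\Psi_0$ with $g = \psi_2 - \overline{\beta}\,\SC{\psi_1}$. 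By Proposition~\ref{prop:inBalpha} the operator $A_\Psi$ is of type $\beta$ exactly when $g\in\spKu$; and if $\varphi\neq0$ then, since $\varphi(0)=0$ while $\Ku(0)=1-|u(0)|^2\neq0$, we have $\varphi\notin\spKu$, so the sublemma yields $g\in\spKu$, whereas $\varphi=0$ makes the symbol $c\Ku$ and $A_\Phi = cI$. The case $\beta=\infty$ runs in parallel: an antiholomorphic symbol $\overline{\varphi_0}+c\Ku$ with $\varphi_0(0)=0$ reduces the condition to $\SC{\varphi_0}\otimes\SC{\psi_1} = \Phi_0\otimes\Ku + \Ku\otimes\Psi_0$, where Proposition~\ref{prop:inBalpha}(2) identifies type $\infty$ for $A_\Psi$ with $\SC{\psi_1}\in\spKu$ and shows $\SC{\varphi_0}\in\spKu$ forces $\SC{\varphi_0}=0$, i.e. $A_\Phi=cI$; so the same sublemma closes the argument.

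I expect the only genuine difficulty to be bookkeeping rather than ideas: carrying the scalars correctly through the conjugate-linear second argument of $\otimes$, reliably absorbing all $\Ku$-terms so that the surviving identity is genuinely rank one in the relevant slot, and verifying in each type that the alternative $a\in\spKu$ is precisely the statement that $A_\Phi$ is a scalar multiple of the identity.
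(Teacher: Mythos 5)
Your proof is correct and takes essentially the same route as the paper: normalize the symbol via Proposition~\ref{prop:phi0}, substitute into Lemma~\ref{lem:TTOhardprodcondition}, absorb the $\Ku$-terms, and resolve the resulting rank-one identity using $\varphi(0)=0$ together with Proposition~\ref{prop:inBalpha}. The only cosmetic differences are that you symmetrize the two factors by taking adjoints where the paper invokes $C$-symmetry of the product to commute them, you treat $\beta=\infty$ by a parallel direct computation where the paper reduces to $\alpha\in\cdisc$ via adjoints, and your explicit rank-one sublemma is just a cleanly stated version of the linear-dependence dichotomy the paper applies implicitly.
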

\begin{proof}
	Since $A_\Phi A_\Psi$ is in $\TTOs$, it is a CSO, and so $A_\Phi A_\Psi = A_\Psi A_\Phi$ by Proposition~\ref{prop:CSOcommute}. Thus we
assume without loss of generality that
	$A_\Phi$ is of type $\alpha$. Additionally $A_\Phi A_\Psi$ is in $\TTOs$ if and only if its adjoint $C A_\Phi A_\Psi C = A_{\overline{\Phi}} A_{\overline{\Psi}}$ is as well, where $A_{\overline{\Phi}}$ is of type $\overline{\alpha}^{-1}$, so we assume without loss of generality that $A_\Phi$ is of type $\alpha\in\cdisc$. So $\Phi\TTOeq\varphi_0 + \alpha\overline{\SC{\varphi_0}} +
c\Ku$ and $\Psi
	\TTOeq \psi_1 + \overline{\psi_2}$ for some $\varphi_0, \psi_1, \psi_2
\in \modsp$,
	where by Proposition~\ref{prop:phi0}  we may assume that $\varphi_0(0)=0$, $c\in\plane$. By Lemma~\ref{lem:TTOhardprodcondition},
there exists $\Phi_0,
	\Psi_0\in\modsp$ such that
	\begin{align*}
		\Phi_0\otimes\Ku + \Ku\otimes\Psi_0
		&=\left(\varphi_0 + c\Ku\right)\otimes\psi_2 - 
	\left(\SC{\left(\overline{\alpha}\SC{\varphi_0}\right)}\right)
	\otimes\left(\SC{\psi_1}\right)\\
		&=\varphi_0\otimes\psi_2 + c\Ku\otimes\psi_2 - 
	\varphi_0	\otimes\left(\overline{\alpha}\SC{\psi_1}\right)\\
&=\varphi_0\otimes\left(\psi_2-\overline{\alpha}\SC{\psi_1}\right) + 	
		c\Ku\otimes\psi_2
	\end{align*}
	So
$\varphi_0\otimes\left(\psi_2-\overline{\alpha}\SC{\psi_1}\right)
	=\Phi_0\otimes\Ku + \Ku\otimes\Psi_1$ for some $\Psi_1\in\modsp$. So either $\Phi_0$ and \Ku\ are
linearly dependent or
	$\Psi_1$ and \Ku\ are. If $\Phi_0$ and \Ku\ are linearly
dependent, then $\Phi_0 =
	c_1\Ku$ which means $\varphi_0 = c_2\Ku$, but this and $\varphi_0(0)= 0$
then imply that
	$c_2 = 0$, and so $\varphi_0 = 0$ and $A_\Phi = cI$.
	Otherwise, $\Psi_1 = c_3\Ku$ and so $ \psi_2-\overline{\alpha}\SC{\psi_1} =
c_4\Ku$,
	which means $A_\Psi$ is of type $\alpha$ by Proposition~\ref{prop:inBalpha}.
	\end{proof}	

We now prove the main theorem of this section.

\begin{thm}\label{thm:result1}
		Let $\Phi, \Psi\in \ltwo$ such that $A_\Phi,A_\Psi\in\TTOs$. Then $A_\Phi 
		A_\Psi\in\TTOs$ if and only if one of two (not mutually
exclusive) cases holds:

		Trivial case: Either $A_\Phi$ or $A_\Psi$ is equal to $cI$ for
some $c\in\plane$.

		Non-trivial case: $A_\Phi$ and $A_\Psi$ are both of type $\alpha$ for
some $\alpha\in\plane^*$, in which case their product is of type $\alpha$ as well.

		\end{thm}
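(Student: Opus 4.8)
The plan is to prove both implications, with the forward (``only if'') direction resting on a rank-one rigidity argument obtained by compressing the condition of Lemma~\ref{lem:TTOhardprodcondition} away from the reproducing kernel at the origin.

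For the ``if'' direction the trivial case is immediate, since $cA_\Psi = A_{c\Psi}\in\TTOs$ and symmetrically. In the non-trivial case I would note that, for $\alpha\in\cdisc$, Theorem~\ref{thm:CSalphaCommutant} identifies the TTOs of type $\alpha$ with $\cmt{\CSalpha}$, and the duality between type $\alpha$ and type $\overline{\alpha^{-1}}$ extends this to all $\alpha\in\cplane$ (with $\cmt{{\CSalpha}^*}$ for $\alpha$ outside the disc); hence each $\Balpha$ is a commutative algebra inside $\TTOs$. Thus if $A_\Phi$ and $A_\Psi$ are both of type $\alpha$, their product lies in $\Balpha$, so it is in $\TTOs$ and is again of type $\alpha$.

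For the ``only if'' direction, assume $A_\Phi A_\Psi\in\TTOs$ and write $\Phi = \varphi_1+\overline{\varphi_2}$, $\Psi = \psi_1+\overline{\psi_2}$. First I would apply Lemma~\ref{lem:TTOhardprodcondition}, which says the product is in $\TTOs$ exactly when
\[\varphi_1\otimes\psi_2 - (\SC{\varphi_2})\otimes(\SC{\psi_1}) = \Phi_0\otimes\Ku + \Ku\otimes\Psi_0\]
for some $\Phi_0,\Psi_0\in\modsp$. The crux is to observe that an operator $T$ admits such a representation if and only if $QTQ=0$, where $Q$ is the orthogonal projection of $\modsp$ onto the orthogonal complement of $\spKu$ (that is, $Q = I - \|\Ku\|^{-2}\Ku\otimes\Ku$). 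Necessity holds because $Q\Ku=0$ kills both $\Phi_0\otimes\Ku$ and $\Ku\otimes\Psi_0$; sufficiency follows by decomposing $T$ along $\spKu\oplus(\spKu)^\perp$ and noting that, once the $(\spKu)^\perp\to(\spKu)^\perp$ block is zero, the three surviving blocks each carry $\Ku$ as a factor and recombine into the required form. Compressing the displayed identity by $Q$ then collapses it to the single rank-one equation
\[(Q\varphi_1)\otimes(Q\psi_2) = (Q\SC{\varphi_2})\otimes(Q\SC{\psi_1}).\]

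Finally I would read off a type from this equation. Either both rank-one operators vanish, or they are proportional. In the proportional case there is a scalar $\mu\in\plane\setminus\{0\}$ with $Q\SC{\varphi_2}=\mu\,Q\varphi_1$, i.e.\ $\SC{\varphi_2}-\mu\varphi_1\in\spKu$; applying $\SC{\cdot}$ and using $\SC{\Ku}=-u(0)\Ku$ together with Equation~\ref{eqn:SCSC} gives $\overline{\mu}\SC{\varphi_1}-\varphi_2\in\spKu$, so by Proposition~\ref{prop:inBalpha} the operator $A_\Phi$ is of type $\mu$. In the degenerate case one of $Q\varphi_1,Q\psi_2$ vanishes, which by Proposition~\ref{prop:inBalpha} makes $A_\Phi$ of type $\infty$ or $A_\Psi$ of type $0$. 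In every case at least one of the two operators is of some type $\alpha\in\cplane$, and Lemma~\ref{lem:TTOBalphaprod} then forces either that operator to equal $cI$ (the trivial case) or the other operator to be of type $\alpha$ as well, with the product again of type $\alpha$ by the ``if'' direction. I expect the main obstacle to be establishing the compression characterization cleanly and disposing of the degenerate sub-cases; once the problem is reduced to an equality of two rank-one operators, the rigidity of rank-one factorizations supplies the common type $\alpha$.
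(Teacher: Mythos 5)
Your proof is correct, and while it shares the paper's overall skeleton---the ``if'' direction via Theorem~\ref{thm:CSalphaCommutant} and adjoint duality, and the ``only if'' direction via Lemma~\ref{lem:TTOhardprodcondition} followed by Lemma~\ref{lem:TTOBalphaprod}---the central step is organized in a genuinely different way. The paper extracts a type from the identity $\varphi_1\otimes\psi_2 - (\SC{\varphi_2})\otimes(\SC{\psi_1}) = \Phi_0\otimes\Ku + \Ku\otimes\Psi_0$ by a three-way case analysis on the rank structure of the right-hand side, and its hardest case (neither $\Phi_0$ nor $\Psi_0$ a multiple of $\Ku$) is settled by an ad hoc test vector $f$ with $f(0)=0$ and $\ip{f}{\Phi_0}=1$ applied to the adjoint of the identity. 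You instead characterize the admissible right-hand sides intrinsically: $T=\Phi_0\otimes\Ku+\Ku\otimes\Psi_0$ for some $\Phi_0,\Psi_0\in\modsp$ precisely when $QTQ=0$ with $Q=I-\|\Ku\|^{-2}\,\Ku\otimes\Ku$, a claim that checks out in both directions (your block decomposition argument is sound, since $P_0T=\Ku\otimes(\|\Ku\|^{-2}T^*\Ku)$ and $QTP_0=(\|\Ku\|^{-2}QT\Ku)\otimes\Ku$ where $P_0=I-Q$), and a single compression collapses the lemma's condition to $(Q\varphi_1)\otimes(Q\psi_2)=(Q\SC{\varphi_2})\otimes(Q\SC{\psi_1})$. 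Your two residual cases then do exactly the work of the paper's three: the degenerate case ($Q\varphi_1=0$ or $Q\psi_2=0$, giving $A_\Phi$ of type $\infty$ or $A_\Psi$ of type $0$ by Proposition~\ref{prop:inBalpha}) absorbs the paper's boundary subcases, and your proportional case---passing from $\SC{\varphi_2}-\mu\varphi_1\in\spKu$ to $\overline{\mu}\SC{\varphi_1}-\varphi_2\in\spKu$ via the antilinear map $\CS C$, Equation~\ref{eqn:SCSC}, and $\SC{\Ku}=-u(0)\Ku$---is the same maneuver the paper performs in its Case 2. What your route buys is uniformity: working modulo $\spKu$ lets the rigidity of rank-one factorizations carry everything and eliminates the test-vector trick; what the paper's route buys is avoiding the compression characterization and staying within direct symbol manipulations. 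The one step worth writing out explicitly is the rigidity itself: equality of nonzero rank-one operators $f\otimes g=f'\otimes g'$ forces $f'=\mu f$ and $g'=\overline{\mu}^{-1}g$ for some $\mu\neq 0$, which simultaneously hands $A_\Psi$ the relation $\overline{\mu}\SC{\psi_1}-\psi_2\in\spKu$---so in fact both operators acquire type $\mu$ directly, though you only need one of them before invoking Lemma~\ref{lem:TTOBalphaprod}.
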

	\begin{proof}The sufficiency of either case follows from earlier discussion, so we prove their necessity. In what follows we will use the fact that if $\Phi$ and $\Psi$ are functions such that $A_\Phi A_\Psi\in\TTOs$, then for any complex constants $c_1, c_2$ 
$A_{\Phi+c_1} A_{\Psi+c_2}\in\TTOs$.

		Suppose $A_\Phi A_\Psi\in\TTOs$. By Lemma~\ref{lem:TTOBalphaprod} it suffices to show that one of $A_\Phi$ and $A_\Psi$ is of type $\alpha$ for some $\alpha\in\cplane$.
		
		There exists $\varphi_i, \psi_i\in\modsp$ such that
		we may assume without loss of generality 
		that $\Phi = \varphi_1 + \overline{\varphi_2}$ and that $\Psi =
\psi_1 + 
		\overline{\psi_2}$. Then it follows by
Lemma~\ref{lem:TTOhardprodcondition} that 
		\[
			\varphi_1\otimes \psi_2-
		(\SC{\varphi_2})\otimes(\SC{\psi_1})
 		= \Phi_0\otimes \Ku  + \Ku \otimes\Psi_0
		\] holds for some $\Phi_0, \Psi_0$ in \modsp. If at least one of $\Phi_0$ and $\Psi_0$ is non-zero, but one of them is in \spKu, then the right-hand side of this equation is a rank one operator $f\otimes g$. Thus we consider the following three cases.
	\begin{enumerate}
		\item $\varphi_1\otimes \psi_2
			-(\SC{\varphi_2})\otimes(\SC{\psi_1}) = 0$
		\item $\varphi_1\otimes \psi_2
			-(\SC{\varphi_2})\otimes(\SC{\psi_1}) =f \otimes g;\ f,g\in\modsp$		 
		\item 	$\varphi_1\otimes \psi_2-
		(\SC{\varphi_2})\otimes(\SC{\psi_1})
 		= \Phi_0\otimes \Ku  + \Ku \otimes\Psi_0;\ \Phi_0, \Psi_0 \neq
c\Ku $
	\end{enumerate}
	
	In what follows, $c$ and $c_i$ represent complex 
	constants that may change from paragraph to paragraph.

	Case 1: We have $\varphi_1\otimes \psi_2
	=(\SC{\varphi_2})\otimes(\SC{\psi_1})$, which means that $\psi_2$
	and $\SC{\psi_1}$ are linearly dependent. 
	Both $\psi_2$ and $\SC{\psi_1}$ are non-zero, so $\psi_2 =
\overline{\alpha}
	\SC{\psi_1}$ for $\alpha\neq 0$ and it follows from Proposition~\ref{prop:inBalpha} that $A_\Psi$ is of type $\alpha$.

	Case 2: We have $\varphi_1\otimes \psi_2 - 
	(\SC{\varphi_2})\otimes(\SC{\psi_1}) = f \otimes g;\ f,g\in\modsp$.	
	So either $\varphi_1$ and $\SC{\varphi_2}$ are linearly dependent or 
	$\SC{\psi_1}$ and $\psi_2$ are. In the latter case, we again get that
	$A_\Psi$ is of type $\alpha$ for some $\alpha\neq 0$.
	Assume instead that $\varphi_1 = c_1\SC{\varphi_2}$ for $c_1\neq0$. Then by Equation~\ref{eqn:SCSC} $c_2\SC{\varphi_1} =\SC{\SC{\varphi_2}} = \varphi_2 - \ip{\varphi_2}{\Ku}\Ku$, and so $\varphi_2 - c_2\SC{\varphi_1} \in\spKu$ and therefore by Proposition~\ref{prop:inBalpha} $A_\Phi$ is of type $\alpha = \overline{c_2}$.	
	
	Case 3: We have
	$\varphi_1\otimes \psi_2- (\SC{\varphi_2})\otimes(\SC{\psi_1})
	= \Phi_0\otimes \Ku  + \Ku \otimes\Psi_0; \Phi_0, \Psi_0 \neq c\Ku$.
There exists $f\in\modsp$
	such that $f(0)=0$ and $\ip{f}{\Phi_0}=1$. Then we have 
	\begin{align*}
		\Ku &= \left(\Psi_0\otimes\Ku + \Ku\otimes\Phi_0\right)f\\
		&= \left(\psi_2\otimes\varphi_1\right)f -
			\left(\SC{\psi_1}\otimes\SC{\varphi_2}\right)f\\
		&= \psi_2\ip{f}{\varphi_1} - \SC{\psi_1}\ip{f}{\SC{\varphi_2}}
	\end{align*}

	If $\ip{f}{\varphi_1} = 0$, then $c\Ku = \SC{\psi_1}$, and so by Proposition~\ref{prop:inBalpha} $A_\Psi$ is of type $\infty$.
	Similarly, if $\ip{f}{\SC{\varphi_2}}=0$, then $c\Ku = \psi_2$ and $A_\Psi$ is of type $0$.
	So we can assume that $\psi_2 = \overline{\alpha}\SC{\psi_1} +
c\Ku$ for
	some $\alpha\neq 0$. Thus $A_\Psi$ is of type $\alpha$ by Proposition~\ref{prop:inBalpha}.
	\end{proof}

\begin{exmp}\label{exmp:FRTTO}
		Theorem 5.1 of ~\cite{sarason} classifies all the rank one operators in $\TTOs$ and finds symbols for them. Specifically, for $\lambda\in\disc$ $\CKu[\lambda]\otimes\Ku[\lambda]$ is in $\TTOs$ and has with symbol $u/(z-\lambda)$, and if $u$ has an ADC at $\zeta\in\torus$ then $\Ku[\zeta]\otimes\Ku[\zeta]$ is in $\TTOs$ and has symbol $\Ku[\zeta] + \overline\Ku[\zeta] - 1$. We will show that all of them are of type $\alpha$ for some $\alpha\in\cplane$, and compute $\alpha$.
	
		Let $\lambda\in\disc$ and consider $A = \CKu[\lambda]\otimes\Ku[\lambda]$, with symbol $u/(z-\lambda)$. Since $\CKu[\lambda](\lambda) = u'(\lambda)$, $\left(\CKu[\lambda]\otimes\Ku[\lambda]\right)^2 = u'(\lambda)\CKu[\lambda]\otimes\Ku[\lambda]$ so it follows that $\CKu[\lambda]\otimes\Ku[\lambda]$ is of type $\alpha$ for some $\alpha\in\plane^*$. Since 
\begin{align*}
u/(z-\lambda) &\TTOeq \CKu[\lambda] + u(\lambda)/(z-\lambda)\\&\TTOeq \CKu[\lambda] + u(\lambda)\overline{zK_\lambda} \\&\TTOeq \CKu[\lambda] + u(\lambda)\overline{\CS \Ku[\lambda]}
 \end{align*}
 $A$ is of type $u(\lambda)$.

		Now instead suppose that $\zeta\in\torus$ such that $u$ has an ADC at $\zeta$, and consider $A = \Ku[\zeta]\otimes\Ku[\zeta]$ which has symbol $\Ku[\zeta] + \overline{\Ku[\zeta]} - 1$. Again it is clear that $A^2$ is a scalar multiple of $A$ and hence $A$ is of type $\alpha$ for some $\alpha$. Since $A$ is self-adjoint, it follows that $\alpha$ is unimodular. We compute 
\begin{align*}
	\CKu[\zeta] &=\frac{u-u(\zeta)}{z-\zeta}\\
	&= \frac{u(\zeta)\left(1-\overline{u(\zeta)}u\right)}{\zeta\left(1-\overline{\zeta}z\right)}\\
	&= \overline{\zeta}u(\zeta)\Ku[\zeta]
\end{align*}
 so \[\SC{\Ku[\zeta]} = \zeta\CKu[\zeta] - u(\zeta)\Ku = u(\zeta)\left(\Ku[\zeta] - \Ku\right)\] Thus $\Ku[\zeta] - 1 \TTOeq \overline{u(\zeta)}\SC{\Ku[\zeta]}$ and so $\Ku[\zeta] + u(\zeta)\overline{\SC{\Ku[\zeta]}}$ is a symbol for $A$, which is therefore of type $u(\zeta)$.
	\end{exmp}
	
Theorem~\ref{thm:result1} has the following consequence which is an analogue of Corollary 2 in~\cite{brownhalmos}.

\begin{thm}\label{thm:TTOinverse}
  Let $A\in\TTOs$ be invertible. Then $A^{-1}\in\TTOs$ if and only if $A$ is of
type $\alpha$ for some $\alpha\in\plane^*$. If $A^{-1}\in\TTOs$, then $A$ and $A^{-1}$ are of the same type
\end{thm}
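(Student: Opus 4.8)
The plan is to prove the two implications separately. The converse---deducing that $A$ has a type from $A^{-1}\in\TTOs$---will fall straight out of Theorem~\ref{thm:result1}, while the forward implication, together with the ``same type'' assertion, will rest on the fact that for each $\alpha\in\plane^*$ the collection of TTOs of type $\alpha$ is the commutant of a single bounded operator, and that commutants are automatically closed under inversion.

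For the converse, suppose both $A$ and $A^{-1}$ lie in $\TTOs$. Since $A A^{-1}=I\in\TTOs$, the product of these two truncated Toeplitz operators is again a truncated Toeplitz operator, so Theorem~\ref{thm:result1} applies with $A_\Phi=A$ and $A_\Psi=A^{-1}$. In the trivial case one of $A,A^{-1}$ equals $cI$, whence $A$ is a (necessarily nonzero) scalar multiple of the identity and so is of type $\alpha$ for every $\alpha\in\plane^*$. In the non-trivial case $A$ and $A^{-1}$ are both of type $\alpha$ for some $\alpha\in\plane^*$. Either way $A$ is of some type, and both cases already record that $A$ and $A^{-1}$ share it.

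For the forward direction, assume $A$ is of type $\alpha\in\plane^*$. If $A=cI$ then $c\neq0$ and $A^{-1}=c^{-1}I\in\TTOs$ is again of every type, so we may assume $A\neq cI$, so that $A$ is of exactly one type. The key point is that the TTOs of type $\alpha$ form the commutant of a single operator $T$: for $\alpha\in\cdisc$ we take $T=\CSalpha$ and invoke Theorem~\ref{thm:CSalphaCommutant}, while for $\alpha\in\plane^*\setminus\disc$ we set $\beta=\overline{\alpha}^{-1}\in\cdisc$ and take $T={\CSalpha[\beta]}^*$, for which $\cmt{{\CSalpha[\beta]}^*}$ is exactly the TTOs of type $\alpha$ by Theorem~\ref{thm:CSalphaCommutant} and the duality between the type of an operator and that of its adjoint. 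Now $A\in\cmt{T}$, and since $A$ is invertible, multiplying the identity $AT=TA$ on both sides by $A^{-1}$ yields $A^{-1}T=TA^{-1}$; thus $A^{-1}\in\cmt{T}$, i.e.\ $A^{-1}$ is a TTO of type $\alpha$. In particular $A^{-1}\in\TTOs$, and $A$ and $A^{-1}$ are of the same type.

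The only genuine bookkeeping, and the step most likely to conceal a slip, is pairing each type $\alpha\in\plane^*$ with the correct operator whose commutant realizes it: the types in $\cdisc$ come from the shifts $\CSalpha$ directly, while the types outside $\cdisc$ (including $\alpha=\infty$) must be routed through the adjoint shift ${\CSalpha[\beta]}^*$ with $\beta=\overline{\alpha}^{-1}$. Once this correspondence is in place, the elementary closure of a single operator's commutant under inversion finishes the forward direction, and the converse is little more than a corollary of Theorem~\ref{thm:result1}.
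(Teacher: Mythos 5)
Your proof is correct and takes essentially the same route as the paper: the converse via Theorem~\ref{thm:result1} applied to $AA^{-1}=I$, and the forward direction via the commutant computation $A^{-1}\CSalpha = A^{-1}\CSalpha A A^{-1} = \CSalpha A^{-1}$. Your handling of $\alpha\in\plane^*\setminus\cdisc$ through $\cmt{{\CSalpha[\beta]}^*}$ with $\beta=\overline{\alpha}^{-1}$ is just the paper's pass-to-adjoints step rephrased in commutant language, since the identification of that commutant with the type-$\alpha$ operators is itself obtained by taking adjoints.
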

\begin{proof}
  If $A^{-1}\in\TTOs$, then both $A$ and $A^{-1}$ are of type $\alpha$ for some
$\alpha\in\plane^*$ by Theorem~\ref{thm:result1} since their product is $I = A_{\Ku}$. If $A$
is of type $\alpha$, either $|\alpha|\leq 1$ or $A^*$ is of type $\beta =
1/\overline{\alpha}\leq 1$. In the first case, we have that $A \CSalpha =
\CSalpha A$, so $A^{-1} \CSalpha = A^{-1} \CSalpha A A^{-1} = A^{-1} A \CSalpha
A^{-1} = \CSalpha A^{-1}$ and $A^{-1}$ is a TTO of type $\alpha$. In the second case, we have that $A^*$ is an
invertible TTO of type $\beta$ where $|\beta|\leq 1$, so its inverse is a TTO of
type $\beta$ as well. By taking adjoints again, the result follows.
\end{proof}

	 $\plane I$ is a subalgebra of $\Balpha$ for every $\alpha$, and the intersection of $\Balpha$ and $\Balpha[\beta]$ is either $\Balpha$ or $\plane I$ depending on whether $\alpha = \beta$ or not. We now consider an arbitrary algebra  $\mathcal{A}$ contained in $\TTOs$ and its relationship to $\Balpha$.
\begin{thm}
	Let $\mathcal{A}$ be an algebra contained in $\TTOs$. 
	Then there exists an $\alpha\in\cplane$ such that $\mathcal{A}$ is a subalgebra of $\Balpha$. 
\end{thm}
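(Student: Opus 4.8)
The plan is to split on whether $\mathcal{A}$ contains any operator that is not a scalar multiple of the identity. If every element of $\mathcal{A}$ has the form $cI$, then the statement is immediate: as recorded in the paragraph preceding the theorem, $\plane I$ is a subalgebra of $\Balpha$ for every $\alpha$, so $\mathcal{A}$ is trivially a subalgebra of, say, $\Balpha[0]$. The substance of the argument is therefore entirely in the other case.

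So suppose some $A \in \mathcal{A}$ is not a scalar multiple of $I$. The first step is to pin down a type for this single operator. Since $\mathcal{A}$ is an algebra, $A^2 = A \cdot A \in \mathcal{A} \subseteq \TTOs$. I would apply Theorem~\ref{thm:result1} to the product $A \cdot A$ (taking $\Phi = \Psi$ a symbol for $A$): the trivial case would force $A = cI$, which is excluded by assumption, so the non-trivial case must hold and $A$ is of type $\alpha$ for some $\alpha \in \cplane$. This $\alpha$ is the candidate, and the claim is that $\mathcal{A} \subseteq \Balpha$.

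To verify the claim, take an arbitrary $B \in \mathcal{A}$. Because $\mathcal{A}$ is an algebra, $AB \in \mathcal{A} \subseteq \TTOs$, so the hypotheses of Lemma~\ref{lem:TTOBalphaprod} are met by the pair $A, B$ with $\alpha \in \plane^* = \cplane$. One factor of the product, namely $A$, is of type $\alpha$; the lemma then gives that either $A$ is a constant multiple of $I$ — which it is not, by our choice — or $B$ is of type $\alpha$ as well. Hence $B \in \Balpha$, and since $B$ was arbitrary, $\mathcal{A} \subseteq \Balpha$.

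The one point I would watch most carefully is the coherence of the choice of $\alpha$: a priori, two different non-scalar elements of $\mathcal{A}$ might each demand a type, and one must check these cannot conflict. This is already handled by the argument above — once $\alpha$ is extracted from one fixed non-scalar $A$, the lemma forces every other element of $\mathcal{A}$, scalar or not, to be of type $\alpha$, so no competing type can occur. (This is consistent with the earlier observation that a non-scalar operator in $\TTOs$ has a single type, which fixes the $\alpha$ attached to each non-scalar element.) Beyond this, the proof is a direct, two-line application of the product theorem followed by the lemma, with no new computation required.
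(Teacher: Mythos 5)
Your proof is correct and follows essentially the same route as the paper: extract a type $\alpha$ from a non-scalar $A$ via $A^2\in\TTOs$ and Theorem~\ref{thm:result1}, then propagate it to every $B\in\mathcal{A}$ through the product $AB$, with the scalar-only case handled trivially. The only cosmetic difference is that you invoke Lemma~\ref{lem:TTOBalphaprod} directly for the propagation step where the paper cites Theorem~\ref{thm:result1} (whose non-trivial case rests on that same lemma), so the arguments coincide.
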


\begin{proof} Suppose every $A$ in $\mathcal{A}$ is of the form $cI$, for $c\in\plane$. Then $I\in\mathcal{A}$ and so $\mathcal{A} =\plane I$ which is a subalgebra of every $\Balpha$. 
	
	 Suppose then that there is $A\in\mathcal{A}$ not of the form $cI$. $A^2\in\TTOs$ so by Theorem~\ref{thm:result1} $A$ is of type $\alpha$ for some unique $\alpha$. If $B\in\mathcal{A}$ then $AB\in\TTOs$ and so since $A \neq cI$ it follows from Theorem~\ref{thm:result1} that $B$ is of type $\alpha$ as well, and therefore every operator in $\mathcal{A}$ is of type $\alpha$, and so it is a subalgebra of $\Balpha$
	\end{proof}

\section{Properties of $\Balpha$}
Due to the duality between $\Balpha$ and $\Balpha[\left(\overline{\alpha}^{-1}\right)]$ via taking adjoints, in order to study these algebras we can look at the cases where $\alpha\in\cdisc$. These algebras can then be divided into two different groups, $\alpha\in\disc$ and $\alpha\in\torus$. Different techniques are needed to deal with each of these cases. We discuss what the product of two TTOs of type $\alpha$ is, and expand on Theorem~\ref{thm:TTOinverse} by finding necessary and sufficient conditions for a TTO of type $\alpha$ to be invertible, based on its symbol.

\subsection{$\alpha\in\disc$} In this subsection, assume $\alpha\in\disc$.

Sarason's Commutant Lifting Theorem~\cite{sarasonold} states that if $A$ is a bounded operator that commutes with $\CS$, then there exists a function $\varphi\in H^\infty$ such that $\|A\|=\|\varphi\|_\infty$ and $A =A_\varphi$. The goal of this subsection is to find a Commutant Lifting Theorem for $\cmt{\CSalpha}$.

Let $u_\alpha = \frac{u-\alpha}{1-\overline{\alpha}u}$ for $\alpha\in\disc$. In what follows, we will be dealing with operators in both $\TTOs$ and $\TTOsa$. Let $A_\Phi^u$ refer to an operator in $\TTOs$ and $A_\Phi^{u_\alpha}$ an operator in $\TTOsa$.

$T_\alpha = M_{(1-|\alpha|^2)^{-1/2}(1-\overline{\alpha}u)}$ is an unitary map from $\modspa$ onto $\modsp$ called a Crofoot transform~\cite{crofoot}. Note that $T_\alpha^{-1} = M_{(1-|\alpha|^2)^{1/2}(1-\overline{\alpha}u)^{-1}}$. Sarason ~\cite{sarason} showed that $\CSalpha = A^u_{z/(1-\alpha\overline{u})}$ and that $T_\alpha^{-1}\CSalpha T_\alpha = A^{u_\alpha}_z$, the compressed shift on $\modspa$. Thus there is a unitary equivalence between $\Balpha$ on $\modsp$ and $\Balpha[0]$ on $\modspa$. The following propositions describe the operators of the form $A^u_{\varphi/(1-\alpha\overline{u})}$ for $\varphi\in H^2$, which are in fact the operators in $\Balpha$.
\begin{prop} \mbox{ }
	\begin{enumerate}
	\item		For $\varphi\in\modsp$ and $\alpha\in\disc$, $A^u_{\varphi/(1-\alpha\overline{u})} = A^u_{\varphi(1+\alpha\overline{u})} = A^u_{\varphi-\alpha\overline{\SC{\varphi}}}$.
	\item		If $\varphi\in H^2$, then $A^u_{\overline{\varphi}/(1-\alpha\overline{u})} = A^u_{\overline{\varphi}}$. Specifically, $A^u_{(1-\alpha\overline{u})^{-1}} = I$.
	\item		$\CSalpha = A^u_{z/(1-\alpha\overline{u})}$.
	\end{enumerate}
\end{prop}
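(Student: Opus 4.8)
The plan is to dispatch the first two equalities with the zero-symbol criterion of Proposition~\ref{prop:TTOzerocond}: two symbols determine the same operator exactly when their difference lies in $uH^2+\overline{uH^2}$. Since $\alpha\in\disc$ we have $|1-\alpha\overline u|\ge 1-|\alpha|>0$ a.e.\ on $\torus$, so $(1-\alpha\overline u)^{-1}\in\linfty$ and $(1-\alpha\overline u)^{-1}=\sum_{k\ge0}\alpha^k\overline u^{\,k}$ with convergence in $\linfty$; in particular every symbol below is bounded. I would first record two elementary facts. \emph{(A)} For $h\in H^2$ and $k\ge1$, $\overline h\,\overline u^{\,k}=\overline{hu^k}\in\overline{uH^2}$, because $hu^k\in uH^2$. \emph{(B)} For $\varphi\in\modsp$, the relation $\varphi\perp uH^2$ says $\widehat{\varphi\overline u}(n)=\ip{\varphi}{uz^n}=0$ for all $n\ge0$, i.e.\ $\varphi\overline u\in\overline{zH^2}$; equivalently $u\overline\varphi\in zH^2\subseteq H^2$.

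For (1) I would subtract the two leftmost symbols, getting $\tfrac{\varphi}{1-\alpha\overline u}-\varphi(1+\alpha\overline u)=\varphi\sum_{k\ge2}\alpha^k\overline u^{\,k}$. Writing $\varphi\overline u=\overline{zh}$ with $h\in H^2$ by (B), we obtain $\varphi\overline u^{\,k}=\overline{zhu^{k-1}}\in\overline{uH^2}$ for every $k\ge2$, so the difference sits in the closed subspace $\overline{uH^2}$ and the first equality follows. The second equality I would reduce to the sign-unambiguous identity $A^u_{\varphi\overline u}=A^u_{\overline{\SC\varphi}}$: a direct computation gives $\SC\varphi=P_u(u\overline\varphi)$, and since $u\overline\varphi\in H^2$ by (B) its complementary part $u\overline\varphi-\SC\varphi$ lies in $uH^2$, so conjugating yields $\varphi\overline u-\overline{\SC\varphi}\in\overline{uH^2}$. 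Thus the single $\alpha\overline u$–power in $\varphi(1+\alpha\overline u)$ may be absorbed into $\alpha\overline{\SC\varphi}$, which exhibits $A^u_{\varphi/(1-\alpha\overline u)}$ in type-$\alpha$ form and completes (1).

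Part (2) is immediate from (A), since $\tfrac{\overline\varphi}{1-\alpha\overline u}-\overline\varphi=\overline\varphi\sum_{k\ge1}\alpha^k\overline u^{\,k}$ and each summand $\overline\varphi\,\overline u^{\,k}=\overline{\varphi u^k}$ lies in $\overline{uH^2}$; taking $\varphi\equiv1$ and using $A^u_1=I$ gives the stated special case $A^u_{(1-\alpha\overline u)^{-1}}=I$. For (3) I would verify that $A^u_{z/(1-\alpha\overline u)}$ and $\CSalpha$ agree on the decomposition $\modsp=\plane\CKu\oplus\{\CKu\}^{\perp}$, whose image under $\CSalpha$ was recorded just after Definition~\ref{defn:CS}. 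If $f\perp\CKu$ then $zf=\CS f\in\modsp$, so $zf\perp uH^2$ and (B) gives $P_u(zf\,\overline u^{\,k})=0$ for all $k\ge1$; expanding the series leaves $A^u_{z/(1-\alpha\overline u)}f=P_u(zf)=zf=\CSalpha f$. On $\CKu$, using $z\CKu=u-u(0)$ together with the one-line identity $P_u(\overline u^{\,m})=\overline{u(0)}^{\,m}\Ku$ (immediate from $P_u=P-M_uPM_{\overline u}$), summing the geometric series yields $A^u_{z/(1-\alpha\overline u)}\CKu=\tfrac{\alpha-u(0)}{1-\alpha\overline{u(0)}}\Ku=\CSalpha\CKu$. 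Alternatively one identifies $A^u_{z/(1-\alpha\overline u)}-\CS$ with the rank-one operator $\tfrac{\alpha}{1-\alpha\overline{u(0)}}\Ku\otimes\CKu$ of Definition~\ref{defn:CS} via $A^u_{z\overline u}=\Ku\otimes\CKu$ (Example~\ref{exmp:FRTTO}); either way this is Sarason's identity.

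The hard part will be the second equality of part (1): everything there rests on fact (B), namely that membership of $\varphi$ in \modsp\ forces $u\overline\varphi$ to be holomorphic, and it is precisely this that lets the single $\overline u$–power be traded for $\overline{\SC\varphi}$ while every higher power falls harmlessly into $\overline{uH^2}$. The analogous bookkeeping when summing the series against $\CKu$ in part (3) is the only other place demanding care.
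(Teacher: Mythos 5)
Your proof is correct. For parts (1) and (2) you take essentially the paper's route: expand $(1-\alpha\overline{u})^{-1}$ in a geometric series and kill all higher powers via Proposition~\ref{prop:TTOzerocond}, and your derivation of $\varphi\overline{u}-\overline{\SC{\varphi}}\in\overline{uH^2}$ through $\SC{\varphi}=P_u(u\overline{\varphi})$ is just a repackaging of the paper's one-line computation $\overline{\SC{\varphi}}=\overline{u}\bigl(\varphi-\varphi(0)\bigr)\TTOeq\varphi\overline{u}$. One point worth flagging: both your argument and the paper's own proof yield $A^u_{\varphi/(1-\alpha\overline{u})}=A^u_{\varphi+\alpha\overline{\SC{\varphi}}}$ with a \emph{plus} sign, whereas the statement reads $\varphi-\alpha\overline{\SC{\varphi}}$; the minus sign in the statement is a typo, since the plus version is the one consistent with the type-$\alpha$ form $\varphi+\alpha\overline{\SC{\varphi}}+c$ and is the version actually used later (e.g.\ in deriving the symbol of $\CSalpha$ from Equation~(\ref{eqn:CSalphaSymbol}) in the paper's proof of part (3)). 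So you have implicitly proved the corrected statement, which is the right thing to do.

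Part (3) is where you genuinely diverge. The paper works entirely inside the $\TTOeq$ calculus: it starts from the symbol (\ref{eqn:CSalphaSymbol}) for $\CSalpha$ (itself obtained from Proposition~\ref{prop:CSalphaCommutant} applied to $\CSalpha$, which commutes with itself), writes $z=\CS\Ku+uP(\overline{u}z)$, computes $P(\overline{u}z)=\overline{u'(0)}+\overline{u(0)}z$, and matches the two symbols modulo $uH^2+\overline{uH^2}$. You instead verify the operator identity directly on the decomposition $\modsp=\plane\CKu\oplus\{\CKu\}^{\perp}$, using that $P_u(zf\,\overline{u}^{\,k})=0$ for $f\perp\CKu$, $k\geq 1$, and the identity $P_u(\overline{u}^{\,m})=\overline{u(0)}^{\,m}\Ku$ — both of which check out (the latter because $P(\overline{u}^{\,m})=\overline{u(0)}^{\,m}$ and $P_u=P-M_uPM_{\overline{u}}$), and your summed value $\frac{\alpha-u(0)}{1-\alpha\overline{u(0)}}\Ku$ agrees with the action of $\CSalpha$ on $\CKu$ recorded after Definition~\ref{defn:CS}. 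Your route is more elementary and self-contained, needing neither Equation~(\ref{eqn:CSalphaSymbol}) nor the commutant machinery; the paper's route buys an explicit symbol identity stated purely in the $\TTOeq$ language, which is the form reused in the $\alpha\in\disc$ subsection. Your sketched alternative via $A^u_{z\overline{u}}=\Ku\otimes\CKu$ also works, but to sum the series one needs $A^u_{z\overline{u}^{\,k}}=\overline{u(0)}^{\,k-1}\Ku\otimes\CKu$, which is the same kernel computation in disguise.
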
 
\begin{proof} \mbox{ }
	 \begin{enumerate}
	 \item		Since
   \[\frac{1}{1-\alpha\overline{u}} = \sum_{n=0}^\infty (\alpha\overline{u})^n\] we can compute 
   \[\frac{\varphi}{1-\alpha\overline{u}} = \sum_{n=0}^\infty \varphi(\alpha\overline{u})^n\] But since $\overline{u}\varphi\in\overline{zH^2}$ it follows that \(\sum_{n=0}^\infty \varphi(\alpha\overline{u})^n \TTOeq \varphi(1+\alpha\overline{u})\) and so $A^u_{\varphi/(1-\alpha\overline{u})}=A^u_{\varphi(1+\alpha\overline{u})}$. The second equality then hold because \[\overline{\SC{\varphi}} = \overline{\widetilde{\CS^* \varphi}} = \overline{u}z\frac{\varphi-\varphi(0)}{z} \TTOeq \varphi\overline{u}.\]
   \item	$\overline{\varphi}/(1-\alpha\overline{u})\TTOeq \overline{\varphi} + \alpha\overline{u\varphi}/(1-\alpha\overline{u})\TTOeq \overline{\varphi}$ by Proposition~\ref{prop:TTOzerocond}, since $\overline{u\varphi}/(1-\alpha\overline{u})\in\overline{uH^2}$.
   \item  Equation~(\ref{eqn:CSalphaSymbol}) and part $(1)$ of this proof imply that $\CSalpha$ has symbol \[\frac{1}{1-\alpha\overline{u(0)}}\left(\frac{\CS\Ku}{1-\alpha\overline{u}}+\alpha\overline{u'(0)}\right)\] so it suffices to show that \begin{equation*}\frac{z(1-\alpha\overline{u(0)})}{1-\alpha\overline{u}} \TTOeq \frac{\CS\Ku}{1-\alpha\overline{u}}+\alpha\overline{u'(0)}.\end{equation*} Since $z = \CS\Ku + uP(\overline{u}z)$,
	\begin{align*}\frac{z}{1-\alpha\overline{u}} &\TTOeq \frac{\CS\Ku}{1-\alpha\overline{u}} + \frac{uP(\overline{u}z)}{1-\alpha\overline{u}}\\&\TTOeq\frac{\CS\Ku}{1-\alpha\overline{u}} + \frac{\alpha P(\overline{u}z)}{1-\alpha\overline{u}}.\end{align*} Since $\CKu = \left(u-u(0)\right)\overline{z}$, $P(\overline{u}z) = \overline{\CKu(0)} + \overline{u(0)}z = \overline{u'(0)} + \overline{u(0)}z$, \begin{align*}\frac{z(1-\alpha\overline{u(0)})}{1-\alpha\overline{u}} &\TTOeq \frac{z}{1-\alpha\overline{u}} -\frac{\alpha\overline{u(0)}z}{1-\alpha\overline{u}}\\&\TTOeq \frac{\CS\Ku}{1-\alpha\overline{u}} + \frac{\alpha\overline{u'(0)}}{1-\alpha\overline{u}}+ \frac{\alpha\overline{u(0)}z}{1-\alpha\overline{u}}- \frac{\alpha\overline{u(0)}z}{1-\alpha\overline{u}}\\&\TTOeq \frac{\CS\Ku}{1-\alpha\overline{u}} + {\alpha\overline{u'(0)}}.\end{align*}
   \end{enumerate}
\end{proof}

\begin{lem}\label{lem:uandualpha}
	Let $\varphi\in H^2$ and $\alpha\in\disc$. Then $T_\alpha A^{u_\alpha}_\varphi T_\alpha^{-1} = A^{u}_{\varphi/(1-\alpha\overline{u})}$ and $T_\alpha A^{u_\alpha}_{\overline{\varphi}} T_\alpha^{-1} = A^{u}_{\overline{\varphi}/(1-\overline{\alpha}u)}$. Therefore $A^{u_\alpha}_\varphi$ and  $A^{u}_{\varphi/(1-\alpha\overline{u})}$ (respectively $A^{u_\alpha}_{\overline{\varphi}}$ and  $A^{u}_{\overline{\varphi}/(1-\overline{\alpha}u)}$)  have the same norm, and if $\psi\in H^2$, then  $A^{u}_{\varphi/(1-\alpha\overline{u})}=A^u_{\psi/(1-\alpha\overline{u})}$ (respectively $A^{u}_{\overline{\varphi}/(1-\overline{\alpha}u)}=A^u_{\overline{\psi}/(1-\overline{\alpha}u)}$) if and only if $u_\alpha | (\varphi - \psi)$.
\end{lem}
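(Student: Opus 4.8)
The plan is to reduce the whole lemma to the two intertwining identities in its first sentence: once these are established, the norm equality is immediate from the unitarity of $T_\alpha$, and the divisibility (uniqueness) statement follows from Proposition~\ref{prop:TTOzerocond} applied in $\modspa$. Throughout I would write $c = (1-|\alpha|^2)^{1/2}$ and $b = c^{-1}(1-\overline{\alpha}u)$, so that $T_\alpha = M_b$ and $T_\alpha^{-1} = M_{b^{-1}}$ with $b,b^{-1}\in H^\infty$ (as $1-\overline{\alpha}u$ is bounded away from $0$ when $|\alpha|<1$). The computation rests on boundary identities on $\torus$, where $\overline{u}u=1$: directly $bu_\alpha = \overline{b}\,u = c^{-1}(u-\alpha)$, and likewise $1-\overline{\alpha}u = cb$, $1-\alpha\overline{u}=c\overline{b}$, and $u-\alpha = u_\alpha(1-\overline{\alpha}u) = c\overline{b}\,u$. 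I would first note that $T_\alpha$ is genuine multiplication into $\modsp$ (not a compression): for $g\in\modspa$ and $h\in H^2$, $\langle bg, uh\rangle = \langle g, u_\alpha(bh)\rangle = 0$ since $\overline{b}u = u_\alpha b$ and $g\perp u_\alpha H^2$; together with the known unitarity of $T_\alpha$ onto $\modsp$ this keeps the calculation clean.

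Next I would prove the holomorphic identity $T_\alpha A^{u_\alpha}_\varphi T_\alpha^{-1} = A^u_{\varphi/(1-\alpha\overline{u})}$ for $\varphi\in H^\infty$ (the case $\varphi\in H^2$ then follows on the dense set of bounded vectors). Fix $f\in\modsp$ and put $w=\varphi f\in H^2$. Splitting $\varphi b^{-1}f = P_{u_\alpha}(\varphi b^{-1}f) + u_\alpha k$ with $k = P(\overline{u_\alpha}\varphi b^{-1}f) = P(\overline{u}\,\overline{b}^{-1}w)$ (using $\overline{u_\alpha}b^{-1} = \overline{u}\,\overline{b}^{-1}$), then applying $M_b$ and replacing $bu_\alpha$ by $\overline{b}u$, gives
\[
 T_\alpha A^{u_\alpha}_\varphi T_\alpha^{-1} f = \varphi f - \overline{b}\,u\,P(\overline{u}\,\overline{b}^{-1}w) = w - \overline{b}\,u\,P(\overline{u}\,\overline{b}^{-1}w).
\]
On the other side, with $H = \varphi f/(1-\alpha\overline{u}) = c^{-1}\overline{b}^{-1}w$, the algebraic identity $H = w + \alpha\overline{u}H$ yields $PH = w + \alpha P(\overline{u}H)$, hence $P_uH = PH - uP(\overline{u}H) = w - (u-\alpha)P(\overline{u}H)$; substituting $u-\alpha = c\overline{b}u$ and $\overline{u}H = c^{-1}\overline{u}\,\overline{b}^{-1}w$ turns the right side into exactly $w - \overline{b}u\,P(\overline{u}\,\overline{b}^{-1}w)$, matching the line above. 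The antiholomorphic identity is then free: since $T_\alpha$ is unitary, $(T_\alpha A^{u_\alpha}_\varphi T_\alpha^{-1})^* = T_\alpha A^{u_\alpha}_{\overline{\varphi}} T_\alpha^{-1}$, while $(A^u_{\varphi/(1-\alpha\overline{u})})^* = A^u_{\overline{\varphi}/(1-\overline{\alpha}u)}$ because $A_\Phi^* = A_{\overline{\Phi}}$ and $\overline{1-\alpha\overline{u}} = 1-\overline{\alpha}u$; taking adjoints of the holomorphic identity gives it.

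Finally I would harvest the consequences. Unitarity of $T_\alpha$ gives $\|A^u_{\varphi/(1-\alpha\overline{u})}\| = \|A^{u_\alpha}_\varphi\|$ and likewise in the antiholomorphic case. For the divisibility statement, the intertwining identities and the bijectivity of $T_\alpha$ show that $A^u_{\varphi/(1-\alpha\overline{u})} = A^u_{\psi/(1-\alpha\overline{u})}$ if and only if $A^{u_\alpha}_{\varphi-\psi}=0$, which by Proposition~\ref{prop:TTOzerocond} (in $\modspa$) means $\varphi-\psi\in u_\alpha H^2 + \overline{u_\alpha H^2}$. Since $\varphi-\psi\in H^2$, I would close with the elementary fact $(u_\alpha H^2 + \overline{u_\alpha H^2})\cap H^2 = u_\alpha H^2$: if $u_\alpha h_1 + \overline{u_\alpha h_2}\in H^2$ then $\overline{u_\alpha h_2}\in H^2\cap\overline{H^2}$ is a constant, which must be $0$ as $u_\alpha$ is nonconstant inner, forcing $h_2=0$; hence $\varphi-\psi\in u_\alpha H^2$, i.e. $u_\alpha\mid(\varphi-\psi)$, and the antiholomorphic case is identical after conjugation. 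The one genuinely delicate step is the holomorphic intertwining identity: the correction term $u_\alpha k$ is exactly what separates $A^u_{\varphi/(1-\alpha\overline{u})}$ from the naive guess $A^u_\varphi$, and the real work is showing, via the Crofoot relation $bu_\alpha=\overline{b}u$ and the telescoping $P_uH = w-(u-\alpha)P(\overline{u}H)$, that this correction reproduces the denominator $1-\alpha\overline{u}$ precisely.
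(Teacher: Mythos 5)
Your proposal is correct and follows essentially the same route as the paper: both verify the intertwining identity on bounded vectors by expanding $P_{u_\alpha}g = g - u_\alpha P(\overline{u_\alpha}\,g)$ and using the Crofoot relations $u_\alpha(1-\overline{\alpha}u) = u-\alpha$ and $\overline{u_\alpha}(1-\overline{\alpha}u)^{-1} = \overline{u}(1-\alpha\overline{u})^{-1}$ (your telescoping $H = w + \alpha\overline{u}H$ is the same cancellation the paper performs, merely applied to the other side of the equality), then obtain the antiholomorphic case by adjoints, the norm equality from unitarity of $T_\alpha$, and the divisibility from Proposition~\ref{prop:TTOzerocond}. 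Your only addition is to spell out the fact that $\left(u_\alpha H^2 + \overline{u_\alpha H^2}\right)\cap H^2 = u_\alpha H^2$, a detail the paper leaves implicit, and this is a correct and welcome clarification.
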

\begin{proof}
It suffices to show that the equalities hold on $K_u^\infty$, so let $f\in K_u^\infty$. Then \[A^{u}_{\varphi/(1-\alpha\overline{u})}f = P_u\left(\frac{f\varphi}{1-\alpha\overline{u}}\right) = P\left(\frac{f\varphi}{1-\alpha\overline{u}}\right) - u P \left(\frac{\overline{u}f\varphi}{1-\alpha\overline{u}}\right)\] On the other hand, 
\begin{align*}
	T_\alpha A^{u_\alpha}_\varphi T_\alpha^{-1}f &= \left(1-\overline{\alpha}u\right)P_{u_\alpha}\left(\frac{f\varphi}{1-\overline{\alpha}u}\right)\\
	&= \left(1-\overline{\alpha}u\right)\left[\frac{f\varphi}{1-\overline{\alpha}u}-u_\alpha P\left( \frac{\overline{u_\alpha}f\varphi}{1-\overline{\alpha}u}\right)\right]\\
	&= f\varphi-(u-\alpha) P \left(\frac{\overline{u}f\varphi}{1-\alpha\overline{u}}\right)\\
	&= f\varphi + P\left(\frac{\alpha\overline{u}f\varphi}{1-\alpha\overline{u}}\right) - u P \left(\frac{\overline{u}f\varphi}{1-\alpha\overline{u}}\right)\\
	&=P\left(\frac{f\varphi}{1-\alpha\overline{u}}\right) - u P \left(\frac{\overline{u}f\varphi}{1-\alpha\overline{u}}\right)
\end{align*}

Since $T_\alpha$ is unitary, it follows that $A^u_{\varphi/(1-\alpha\overline{u})}= A^u_{\psi/(1-\alpha\overline{u})}$ if and only if $A^{u_\alpha}_\varphi = A^{u_\alpha}_\psi$, but by Proposition~\ref{prop:TTOzerocond} the latter is true if and only if $u_\alpha | \varphi -\psi$.

Since $T_\alpha$ is unitary, we have
\begin{align*}
	A^u_{\overline{\varphi}/(1-\overline{\alpha}u)} &= \left (A^u_{\varphi/(1-\alpha\overline{u})}\right)^* \\
	&= \left(T_\alpha A^{u_\alpha}_\varphi T_\alpha^{-1}\right)^* \\
&= T_\alpha A^{u_\alpha}_{\overline{\varphi}} T_\alpha^{-1}
\end{align*}
proving the result for the adjoints.

\end{proof}

\begin{thm}
	Let $A$ be an bounded operator on $\modsp$ and let $\alpha\in\disc$. Then $A$ is of type $\alpha$ if and only if there is a function $\varphi\in H^2$ such that $A = A^{u}_{\varphi/(1-\alpha\overline{u})}$. If $A$ is of type $\alpha$ then there is a function $\psi\in H^\infty$ such that $\|\psi\|_\infty = \|A\|$ and $A = A^{u}_{\psi/(1-\alpha\overline{u})}$ and therefore every operator of type $\alpha$ has a bounded symbol. Further, if $\varphi, \psi$ are in $H^\infty$ then $A^u_{\varphi/(1-\alpha\overline{u})} A^u_{\psi/(1-\alpha\overline{u})} = A^u_{\varphi\psi/(1-\alpha\overline{u})}$.
\end{thm}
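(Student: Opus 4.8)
The plan is to transport the entire problem to the model space \modspa\ by means of the Crofoot transform $T_\alpha$ and there invoke Sarason's classical Commutant Lifting Theorem. The two facts recalled just above this theorem---that $T_\alpha^{-1}\CSalpha T_\alpha = A^{u_\alpha}_z$ is the ordinary compressed shift on \modspa, and that $T_\alpha A^{u_\alpha}_\varphi T_\alpha^{-1} = A^u_{\varphi/(1-\alpha\overline{u})}$ by Lemma~\ref{lem:uandualpha}---combine with the type/commutant dictionary of Theorem~\ref{thm:CSalphaCommutant} to make this transfer clean. Concretely, since $T_\alpha$ is unitary and $\CSalpha = T_\alpha A^{u_\alpha}_z T_\alpha^{-1}$, a bounded operator $A$ on \modsp\ lies in $\cmt{\CSalpha}$ if and only if $B:=T_\alpha^{-1}AT_\alpha$ lies in $\cmt{A^{u_\alpha}_z}$; and by Theorem~\ref{thm:CSalphaCommutant}, lying in $\cmt{\CSalpha}$ is the same as being of type $\alpha$.

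For the forward implication together with the norm and boundedness assertions, I would start from $A$ of type $\alpha$, so that $A\in\cmt{\CSalpha}$ and hence $B=T_\alpha^{-1}AT_\alpha\in\cmt{A^{u_\alpha}_z}$. Sarason's Commutant Lifting Theorem, applied on \modspa, then furnishes $\psi\in H^\infty$ with $B=A^{u_\alpha}_\psi$ and $\|\psi\|_\infty=\|B\|$. Transporting back by Lemma~\ref{lem:uandualpha} gives $A=T_\alpha A^{u_\alpha}_\psi T_\alpha^{-1}=A^u_{\psi/(1-\alpha\overline{u})}$, and since $T_\alpha$ is unitary, $\|A\|=\|B\|=\|\psi\|_\infty$. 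As $\psi\in H^\infty\subseteq H^2$, this simultaneously establishes the ``only if'' half of the first equivalence and the existence of a bounded symbol of matched norm.

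For the reverse implication I would suppose $A=A^u_{\varphi/(1-\alpha\overline{u})}$ with $\varphi\in H^2$ and $A$ bounded. By Lemma~\ref{lem:uandualpha}, $B:=T_\alpha^{-1}AT_\alpha=A^{u_\alpha}_\varphi$ is bounded, and since $\varphi\in H^2$ is holomorphic, $B$ is a TTO of type $0$ on \modspa. Applying Theorem~\ref{thm:CSalphaCommutant} to the inner function $u_\alpha$ with parameter $0$ (so that the relevant shift is $A^{u_\alpha}_z$), being of type $0$ is exactly membership in $\cmt{A^{u_\alpha}_z}$, so $B\in\cmt{A^{u_\alpha}_z}$, whence $A=T_\alpha B T_\alpha^{-1}\in\cmt{\CSalpha}$ and $A$ is of type $\alpha$ by Theorem~\ref{thm:CSalphaCommutant} once more. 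This is the step where one must be careful, since $\varphi$ need only lie in $H^2$: the point is that ``type $0$'' is defined purely by the possession of a holomorphic symbol, so no boundedness of the symbol is needed to conclude commutation with the compressed shift.

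Finally, for the product formula with $\varphi,\psi\in H^\infty$, I would again conjugate by $T_\alpha$: by Lemma~\ref{lem:uandualpha} the asserted identity is equivalent to $A^{u_\alpha}_\varphi A^{u_\alpha}_\psi = A^{u_\alpha}_{\varphi\psi}$ on \modspa. This is the Brown--Halmos product rule for analytic symbols: for $f\in K_{u_\alpha}^\infty$ one computes $A^{u_\alpha}_\varphi A^{u_\alpha}_\psi f - A^{u_\alpha}_{\varphi\psi}f = P_{u_\alpha}\big(\varphi(P_{u_\alpha}-I)(\psi f)\big)$, and since $\psi f\in H^2$ the term $(I-P_{u_\alpha})(\psi f)$ lies in $u_\alpha H^2$, so multiplication by $\varphi\in H^\infty$ keeps it in $u_\alpha H^2$, which $P_{u_\alpha}$ annihilates. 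I expect the genuine content of the theorem to be concentrated in the forward direction, where the existence of a bounded symbol of matched norm rests entirely on the imported Commutant Lifting Theorem; the remaining steps amount to bookkeeping through the unitary $T_\alpha$.
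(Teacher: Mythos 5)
Your proposal is correct and takes essentially the same route as the paper: conjugate by the Crofoot transform $T_\alpha$ to move the problem to \modspa, identify ``type $\alpha$'' with membership in $\cmt{\CSalpha}$ via Theorem~\ref{thm:CSalphaCommutant}, invoke Sarason's Commutant Lifting Theorem there to get $\psi\in H^\infty$ with $\|\psi\|_\infty = \|A\|$, and transport back via Lemma~\ref{lem:uandualpha}. The only difference is cosmetic: you supply the Brown--Halmos computation verifying $A^{u_\alpha}_\varphi A^{u_\alpha}_\psi = A^{u_\alpha}_{\varphi\psi}$ for $\varphi,\psi\in H^\infty$, a standard fact the paper uses without proof.
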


\begin{proof}
	Let $B = T_\alpha^{-1}AT_\alpha$. Then \[AA^{u}_{z/(1-\alpha\overline{u})} = A^{u}_{z/(1-\alpha\overline{u})}A\] if and only if \[BA^{u_\alpha}_z = T_\alpha^{-1}AA^{u}_{z/(1-\alpha\overline{u})}T_\alpha = T_\alpha^{-1}A^{u}_{z/(1-\alpha\overline{u})}AT_\alpha = A^{u_\alpha}_zB\] But this is true if and only if $B = A^{u_\alpha}_\varphi$ for some $\varphi\in H^2$ which is true if and only if $A = A^{u}_{\varphi/(1-\alpha\overline{u})}$ for some $\varphi\in H^2$, hence the first claim holds. By the Commutant Lifting Theorem, there is a function $\psi\in H^\infty$ such that $A^{u_\alpha}_\varphi = A^{u_\alpha}_\psi$ and $\|A^{u_\alpha}_\varphi\| = \|\psi\|_\infty$. By Lemma~\ref{lem:uandualpha} it follows that $A = A^u_{\psi/(1-\alpha\overline{u})}$. Since $T_\alpha$ is unitary, $\|A\| = \|\psi\|_\infty$.
	
To prove the last claim, we compute \[A^u_{\varphi/(1-\alpha\overline{u})} A^u_{\psi/(1-\alpha\overline{u})} = T_\alpha^{-1}  A^{u_\alpha}_{\varphi} A^{u_\alpha}_{\psi} T_\alpha = T_\alpha^{-1} A^{u_\alpha}_{\varphi\psi} T_\alpha =   A^u_{\varphi\psi/(1-\alpha\overline{u})}\]
\end{proof}

Just as $A^u_\varphi = \varphi(\CS)$ for $\varphi\in H^\infty$, we get that $A^u_{\varphi/(1-\alpha\overline{u})} = \varphi\left(\CSalpha\right)$ for $\varphi\in H^\infty$.

Note that $\lambda$ is in the spectrum of $A^u_\varphi$ if and only if $\inf_{z\in\disc}(|u(z)| + |\varphi(z)-\lambda|) = 0$~\cite{CMR}. 
\begin{prop}
Let $\alpha\in\disc$ and let $\varphi\in H^\infty$. Then $A^u_{\varphi/(1-\alpha\overline{u})}$ is invertible if and only if $\inf_{z\in\disc}(|u_\alpha(z)| + |\varphi(z)|) > 0$
\end{prop}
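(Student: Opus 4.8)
The plan is to transport the problem to an ordinary truncated Toeplitz operator on the model space $\modspa$ via the Crofoot transform, and then invoke the spectral description recalled just before the statement.

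First I would use Lemma~\ref{lem:uandualpha}, which gives $A^u_{\varphi/(1-\alpha\overline{u})} = T_\alpha A^{u_\alpha}_\varphi T_\alpha^{-1}$, where $T_\alpha$ is the unitary Crofoot transform from $\modspa$ onto $\modsp$. Since conjugation by a unitary operator preserves invertibility, $A^u_{\varphi/(1-\alpha\overline{u})}$ is invertible on $\modsp$ if and only if $A^{u_\alpha}_\varphi$ is invertible on $\modspa$.

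Next I would apply the spectral criterion cited above from~\cite{CMR}, which holds for the model space of any non-constant inner function, to the operator $A^{u_\alpha}_\varphi$ built from $u_\alpha$, taking $\lambda = 0$. This says that $0$ lies in the spectrum of $A^{u_\alpha}_\varphi$ if and only if $\inf_{z\in\disc}(|u_\alpha(z)| + |\varphi(z)|) = 0$. As $A^{u_\alpha}_\varphi$ is invertible precisely when $0$ is not in its spectrum, it is invertible if and only if $\inf_{z\in\disc}(|u_\alpha(z)| + |\varphi(z)|) > 0$. Combining this with the unitary equivalence of the previous step yields the claim.

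Because both steps follow immediately from results already in hand — the unitary equivalence of Lemma~\ref{lem:uandualpha} and the spectral formula of~\cite{CMR} — I do not anticipate any genuine obstacle. The only point deserving a word of care is that the cited spectral formula, although stated for $u$, is valid for an arbitrary non-constant inner function, so that it may legitimately be applied with $u_\alpha$ in place of $u$.
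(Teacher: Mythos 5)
Your proposal is correct and follows exactly the paper's own argument: the paper also passes to $A^{u_\alpha}_\varphi$ via the unitary Crofoot transform of Lemma~\ref{lem:uandualpha} and then applies the spectral criterion from~\cite{CMR} (at $\lambda = 0$) with $u_\alpha$ in place of $u$. Your added remark that the criterion applies to an arbitrary non-constant inner function is a sensible point of care that the paper leaves implicit.
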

\begin{proof}

$A^u_{\varphi/(1-\alpha\overline{u})}$ is invertible if and only if $A^{u_\alpha}_\varphi$ is invertible, which is true if and only if  $\inf_{z\in\disc}(|u_\alpha(z)| + |\varphi(z)|) > 0$.
\end{proof}
\subsection{$\alpha\in\torus$}

The case of $|\alpha| = 1$ is indirectly dealt with in~\cite{sarason,arXiv} and we collect those results here.
There are TTOs of unimodular type without a bounded symbol under certain conditions. Specifically, in \cite{arXiv} it is shown that there exists $u$ an inner function with an ADC at $\zeta\in\torus$ such that $\Ku[\zeta]\otimes\Ku[\zeta]\in\TTOs$ does not have a bounded symbol.

 Example~\ref{exmp:FRTTO} shows that $\Ku[\zeta]\otimes\Ku[\zeta]$ is of type $u(\zeta)$, and hence it is an example of a TTO of unimodular type without a bounded symbol.

If, however, we weaken what we mean by ``bounded symbol'' we can find a bounded symbol for any TTO of unimodular type. Specifically, we change the measure with respect to which we take the sup norm of a function. 

Let $\alpha$ be unimodular, and fixed for the rest of this section. An operator is of type $\alpha$ if and only if it commutes with $\CSalpha$, which is in this case a unitary operator known as a Clark unitary operator, and is unitarily equivalent to $M_z$ on the space $L^2(\torus, \mu_\alpha)$ where $\mu_\alpha$ is the Clark measure associated with $\CSalpha$~\cite{clark}. $\cmt{M_z}$ is the space of multiplication operators induced by $L^\infty(\mu_\alpha)$ and so by using the unitary equivalence, every operator of type $\alpha$ is equal to $\Phi(\CSalpha)$ where $\Phi\in L^\infty(\mu_\alpha)$. In this sense we can think about $\Phi$ as a ``bounded symbol'' for the operator. This gives us a symbol calculus of sorts for operators of type $\alpha$: given $\Phi,\Psi$ bounded $\mu_\alpha$-almost everywhere, the product of $M_\Phi$ and $M_\Psi$ is $M_{\Phi\Psi}$ where $\Phi\Psi$ is itself bounded $\mu_\alpha$-almost everywhere. Hence $\Phi(\CSalpha)\Psi(\CSalpha) = \Phi\Psi(\CSalpha)$. It follows that a TTO of type $\alpha$ is invertible if and only if it is of the form $\Phi(\CSalpha)$, where $|\Phi|\geq \delta > 0$ $\mu_\alpha$-almost everywhere.

We can use this symbol calculus to precisely describe the unitary operators in $\TTOs$ on a given model space.

\begin{prop}\label{prop:TTOunitary}
	Let $A\in\TTOs$. Then $A$ is unitary if and only if it is equal to $\Phi(\CSalpha)$ for some $\alpha\in\torus$ and some $\Phi\in L^\infty(\torus,\mu_\alpha)$ such that $|\Phi| = 1$ $\mu_\alpha$-almost everywhere. Specifically, any unitary operator in $\TTOs$ is of unimodular type, and commutes with the Clark unitary operator of the same type.
\end{prop}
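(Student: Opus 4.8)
The plan is to treat the two implications separately, leaning on the unimodular symbol calculus set up immediately above the statement and on Theorem~\ref{thm:result1}.

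The sufficiency is routine. Starting from the hypothesis that $A = \Phi(\CSalpha)$ with $\alpha\in\torus$ and $|\Phi| = 1$ $\mu_\alpha$-almost everywhere, I would pass through the unitary equivalence between $\CSalpha$ and $M_z$ on $L^2(\torus,\mu_\alpha)$, under which $A$ corresponds to the multiplication operator $M_\Phi$. Since $A^* = \overline{\Phi}(\CSalpha)$ and the calculus gives $\Phi(\CSalpha)\Psi(\CSalpha) = (\Phi\Psi)(\CSalpha)$, we get $A^* A = A A^* = (|\Phi|^2)(\CSalpha) = I$, so $A$ is unitary.

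The substantive direction is necessity. Suppose $A\in\TTOs$ is unitary. Then $A^* = A_{\overline{\Phi}}\in\TTOs$ and $A^* A = I = A_{\Ku}$ lies in $\TTOs$, so Theorem~\ref{thm:result1} applies to the product $A^* A$. In the trivial case one of $A,A^*$ equals $cI$; unitarity then forces $A = cI$ with $|c| = 1$, and for any $\alpha\in\torus$ the constant function $\Phi\equiv c$ gives $A = \Phi(\CSalpha)$ with $|\Phi| = 1$, so the conclusion holds. In the non-trivial case Theorem~\ref{thm:result1} produces a single $\alpha\in\cplane$ for which both $A$ and $A^*$ are of type $\alpha$. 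Here I would invoke the duality between types under adjoints established earlier: since $A$ is of type $\alpha$, its adjoint $A^*$ is also of type $\overline{\alpha^{-1}}$. Thus $A^*$ carries both types $\alpha$ and $\overline{\alpha^{-1}}$; if these were distinct, the earlier result that an operator of two different types must be a multiple of $I$ would return us to the trivial case. Hence $\alpha = \overline{\alpha^{-1}}$. Discarding the degenerate values $\alpha\in\{0,\infty\}$ (where $0\neq\infty$ again forces $A^* = cI$), this equation reads $\alpha\overline{\alpha} = 1$, i.e. $|\alpha| = 1$. So $A$ is of unimodular type; by Theorem~\ref{thm:CSalphaCommutant} it commutes with the Clark unitary $\CSalpha$, and the unimodular calculus writes $A = \Phi(\CSalpha)$ for some $\Phi\in L^\infty(\torus,\mu_\alpha)$. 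Translating unitarity of $A$ through $A\cong M_\Phi$ gives $M_\Phi^* M_\Phi = M_{|\Phi|^2} = I$, whence $|\Phi| = 1$ $\mu_\alpha$-almost everywhere.

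The one place demanding care, and which I expect to be the main obstacle, is the non-trivial case: the argument hinges on cleanly combining Theorem~\ref{thm:result1} (which forces a \emph{common} type $\alpha$ for $A$ and $A^*$) with the adjoint duality (which independently forces $A^*$ to be of type $\overline{\alpha^{-1}}$), and on correctly disposing of $\alpha\in\{0,\infty\}$ so that $\alpha = \overline{\alpha^{-1}}$ delivers unimodularity rather than a contradiction.
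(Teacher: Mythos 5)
Your proposal is correct and follows essentially the same route as the paper: apply Theorem~\ref{thm:result1} to the product with the adjoint to get a common type $\alpha$ for $A$ and $A^*$, combine with the adjoint duality to force $\alpha = \overline{\alpha^{-1}}$, hence $|\alpha|=1$, and then read off $A = \Phi(\CSalpha)$ with $|\Phi|=1$ $\mu_\alpha$-a.e.\ from the Clark-measure symbol calculus. The only difference is that you explicitly dispose of the trivial case $A = cI$ and the degenerate values $\alpha\in\{0,\infty\}$ via the two-types-implies-scalar proposition, steps the paper's terser proof leaves implicit.
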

\begin{proof}
	If $A$ is unitary then $A A^* = I$, which means that $A$ and $A^*$ must both be of the same type $\alpha\in\cplane$. Thus $\alpha = \overline{\alpha}^{-1}$ which implies that $\alpha$ is of unimodular type. So $A = \Phi(\CSalpha)$ for some $\Phi\in L^\infty(\torus,\mu_\alpha)$. Then $I = A A^* = \Phi(\CSalpha)\overline{\Phi}(\CSalpha) = |\Phi|^2(\CSalpha)$ which implies that $|\Phi| = 1$ $\mu_\alpha\mathrm{-almost}$ everywhere. The other direction is obvious.
\end{proof}

\providecommand{\bysame}{\leavevmode\hbox to3em{\hrulefill}\thinspace}
\providecommand{\MR}{\relax\ifhmode\unskip\space\fi MR }
\providecommand{\MRhref}[2]{%
  \href{http://www.ams.org/mathscinet-getitem?mr=#1}{#2}
}
\providecommand{\href}[2]{#2}

\end{document}